\newtheorem{theorem}{Theorem}[section]
\newtheorem{lemma}[theorem]{Lemma}
\theoremstyle{definition}
\newtheorem{definition}[theorem]{Definition}
\newtheorem{example}[theorem]{Example}
\newtheorem{corollary}[theorem]{Corollary}
\newtheorem{proposition}[theorem]{Proposition}
\theoremstyle{remark}
\newtheorem{remark}[theorem]{Remark}
\numberwithin{equation}{section}
\begin{document}

\title[Classical simple Lie $2$-algebras of toral rank $3$ ]{Classical simple Lie $2$-algebras of toral rank $3$ and a contragredient Lie 2-algebra of toral rank $4$.}

\author{Carlos R. Payares Guevara}
\address{
Faculty of basic sciences , Universidad Tecnol\'ogica de Bol\'var, Cartagena de Indias - Colombia}
\email{cpayares@utb.edu.co}

\author{Fabi\'an A. Arias Amaya.}
\address{Faculty of basic sciences, Universidad Tecnol\'ogica de Bol\'ivar, Cartagena de Indias - Colombia}
\email{farias@utb.edu.co}

\subjclass[2010]{17B50; 17B20; 17B05.}

\date{January 28, 2019.}


\keywords{Simple Lie $2$-algebra, Toral rank, Classical type Lie algebra, Contragredient Lie algebra }

\begin{abstract}
In this paper we show there are no classical type simple Lie $2$-algebras with toral rank odd and  we  also show  that the simple contragredient Lie $2$-algebra $G(F_{4, a})$ of dimension $34$  has toral rank $4$, and we give  the Cartan decomposition of $G(F_{4, a})$.
\end{abstract}

\maketitle

\section*{Introduction}

The classification  of the simple Lie algebras over an algebraically closed field of characteristic $p$  with $p \in\{2,3\}$ is still an open problem. In characteristic $2$, S. Skryabin in \cite{gr2} showed that all simple Lie algebras on an algebraically closed field of characteristic $2$ have absolute toral rank greater than or equal to $2$ (see also \cite{gr21345}). Later, A. Premet and  A. Grishkov classified Lie algebras of absolute toral rank $2$. They annouced in \cite{gr} (work in progress) the following result:
All  finite dimensional simple Lie algebra over an algebraically closed field of characteristic $2$ of absolute toral rank $2$ are classical of dimesion $3$, $8$, $14$ or $26$. In particular, all  finite dimensional simple Lie $2$-algebra over a field of characteristic $2$ of (relative) toral rank $2$ is isomorphic to $A_{2}$, $G_{2}$ or $D_{4}.$ 
When the rank absolute is greater than or equal to $3$ the problem of classification is still open. The main obstacle in this problem is the lack of examples. 

\medskip

In this paper we calculate the toral rank of the Classical simple Lie $2$-algebras of type $X_{l}\in\{A_{l},B_{l},C_{l},D_{l},\mathfrak{g}_{2},\mathfrak{f}_{4},\mathfrak{e}_{6},\mathfrak{e}_{7},\mathfrak{e}_{8}\}$, i.e., quotients of Chevalley algebras over a field of characteristic $2$, module the center.  
As a consequence, we obtain  our first main result:

\medskip

\noindent \textbf{Theorem 1.} 
There are no classical type simple Lie $2$-algebra  of odd toral rank.
In particular, there are no classical type simple Lie $2$-algebra  of odd toral rank.

\medskip
V. Kac in \cite{VK} showed that for $p > 3$   every simple finite dimensional contragredient Lie algebra is isomorphic to one of the simple Lie algebras of the classical type. If $p = 2$, this is no longer true and  the classification of simple finite dimensional contragredient Lie algebras  is still considered an open problem. In the last section we proved that the simple contragredient Lie $2$-algebra of dimension $34$ constructed by V. G. Kac and V. Ve\u{i}sfe\u{i}ler  in \cite{ka} has toral rank $4$
   and we calculate the dimension of the root spaces of this contragredient Lie algebra. More specifically, we  have:

\medskip

\noindent \textbf{Theorem 2.}
The simple contragredient Lie $2$-algebra of dimension $34$ with Cartan matrix    \[F_{4,a}:= \left( \begin{array}{cccc} 
0 & 1 & 0 & 0\\
a & 0 & 1 & 0 \\
0 & 1 & 0 & 1 \\
0 & 0 & 1 & 0 \end{array} \right),\] 
 which is denoted by $G(F_{4,a})$, has toral rank $4$. Furthermore,   the Cartan descomposition of $G(F_{4,a})$ with respect to the $4$-dimensional torus $T(\mathfrak{h})$
  is
  \[G(F_{4,a})=T(\mathfrak{h})\oplus\left(\underset{\xi\in G}\oplus\mathfrak{g}_\xi\right)\] where  $G:=\left\langle \alpha,\beta,\gamma,\lambda\right\rangle$ is an elementary abelian group of order $16$, and $\text{dim}_{K}(\mathfrak{g}_\xi)=2$, for all $\xi\in G$.

\medskip

The only  classical type simple Lie $2$-algebra of toral rank $4$ over a algebraically closed field of characteristic $2$ are  $\mathfrak{sl}_{5}(K)$, $\mathfrak{psl}_{6}(K)$, $\mathfrak{sp}_{10}(K)^{(2)}$ and\\ $\mathfrak{sp}_{12}(K)^{(2)}/\mathfrak{z}(\mathfrak{gl}_{12}(K))$ (see Corollary \ref{COR}). Theorem (2) gives us an example of a no classical simple Lie $2$-algebra, which should be taken into account in future investigations related to the problem of classifying the simple Lie 2-algebras of toral rank 4.
   
   \medskip

In  section 1 we present some basic definitions and well-known results that will be used throughout the work. In Section 2 and 3 we show that the linear special Lie algebra $\mathfrak{sl}_{n+1}(K)$ and the symplectic Lie algebra $\mathfrak{sp}_{2m}(K)$   over an algebraically closed field of characteristic $2$ are Lie $2$-algebra, and we study the simplicity of these algebra (Theorem \ref{isa 1} and \ref{isa 2}).   In section 4 we show that the orthogonal Lie algebra $\mathfrak{o}_{n}(K)^{(1)}$ is not a Lie $2$-algebra. In section 5 we list all classical type simple Lie $2$-algebras, we calculate its toral rank, and we conclude that are  there  no classical type simple Lie $2$-algebas with odd  toral rank. Finally, in last section  we show that the simple Lie $2$-algebra of dimension $34$ constructed by V. G. Kac and V. Ve\u{i}sfe\u{i}ler  in \cite{ka} has toral rank $4$, and we also give the Cartan decomposition of this algebra. 

\section{Preliminaries.}

Throughout this paper all algebras are defined over a fixed algebraically closed field $K$ of charecteristic $2$ containing the prime field  $\mathbb{K}_{2}$ and $\mathfrak{g}$ is any Lie algebra of finite dimension on $K$. We start with some basic definitions and known facts.

\subsection{Simple Lie $2$-algebra.}

\begin{definition}
A \textit{Lie $2$-algebra} is a pair $(\mathfrak{g}, [2])$ where $\mathfrak{g}$ is a Lie algebra over $K$, and $[2]: \mathfrak{g}\rightarrow  \mathfrak{g} $, $ a\mapsto a ^ {[2]} $ is a map (called $2$-map) such that:
\begin{enumerate}
 \item  $ {(a + b)} ^ {[2]} = {a} ^ {[2]} + {b} ^ {[2]} + [a, b] $,\quad $\forall\,\,  a, b\in\mathfrak{g} $.
 \item  $ (\lambda a) ^ {[2]} = \lambda ^ {2} a ^ {[2]} $,\quad $ \forall\,\, \lambda \in K $,\quad $ \forall\,\, a \in \mathfrak{g}. $
 \item $\text{ad}({b}^{[2]})=(ad(b))^{2} $,\quad $ \forall\,\, b \in \mathfrak{g} $.
 \end{enumerate}
\end{definition}
If the center, $\mathfrak{z}(\mathfrak{g})$,  of $\mathfrak{g}$  is zero  and a $2$-map $[2]:\mathfrak{g}\to \mathfrak{g}$ exists, it is unique. A Lie $2$-algebra  $(\mathfrak{g}, [2])$ is called a \textit{simple Lie $2$-algebra}, if $\mathfrak{g}$ is a simple Lie algebra on $K$.
\begin{example}
\label{EX3}
Let $ A $ be an associative algebra and let  $\text{Lie}(A)$ be the  Lie algebra with bracket $[x, y]=x\circ y-y\circ x$ for $x, y\in \text{Lie}(A)$ associated with $A$. Then,  $\text{Lie}(A)$ is a Lie $2$-algebra with $ a^{[2]}:= a^{2}.$ In particular, $\text{Lie}(\text{End}(V)):=  \mathfrak{gl}(V)$ is a  Lie $2$-algebra, where $\text{End}(V)$ is the associative algebra of $ K$-endomorphism on $V$.
\end{example}
\begin{example}

Let $ b: V \times V \rightarrow K $ be a bilinear form and consider the subset $ \mathfrak {g} (V, b) $ of $ \mathfrak {gl} (V) $ defined by $$ \mathfrak { g} (V, b): = \{f\in\mathfrak {gl} (V): b (f (u), v) + b (u, f (v)) = 0 \, \, \, \forall u, v \in V \}. $$ Then, $(\mathfrak{g}(V,b),[2])$ is  a Lie $2$-subalgebra of $(\mathfrak{g}(V),[2])$. Indeed, take $f,g\in\mathfrak{gl}(V,b)$ and $v,w\in V$. Then,
\begin{equation*}
   \begin{split}
b([f,g](v),w) &=b((fg)(v),w)-b((gf)(v),w)
                    =-b(g(v),f(w))-b(f(v),g(w))\\
                    &=-b(v,g(f(w)))+b(v,f(g(w)))=b(v,[f,g](w)).
 \end{split}
 \end{equation*}
This fact shows that $\mathfrak{g}(V, b)$ is a Lie subalgebra of $\mathfrak {gl}(V)$. Moreover,
\[b(f^{[2]}(v),w)=b(f(f(v)),w)=b(f(v),f(w))=b(v,f(f(w)))=b(v,f^{[2]}(w)).\]

 Therefore, $ f^{[2]} \in \mathfrak{g}(V, b) $, for all $ f \in \mathfrak{gl}(V). $ 
 
 \medskip
 
  It will be useful to have the matricial version of $ \mathfrak{g}(V, b) $. Given $ A \in \mathfrak{gl}_{n}(K) $, consider \[\mathfrak {g} (A) = \{X \in\mathfrak{gl}_{n}(K): X^{T}A = AX\} .\]
  
Let $ \Theta = \{v_1, v_2, ..., v_n \} $ be a basis of $ V $ and assume that $ A \in\mathfrak {gl}_{n}(K)$ is the Gram matrix of $ b $ with respect to the basis $ \Theta $, that is, $$ a_ {ij} = b (v_i, v_j) \, \, \, 1 \leq i, j \leq n. $$ So,  
 $\mathfrak{g}(V,b)$ and $\mathfrak{g}(A)$ are isomorphic as Lie $2$-algebra.
 
 \medskip
 
Two matrices $ A, B \in\mathfrak{gl}_{n}(K) $, are said to be congruent if there is $S\in \text{GL}_ {n}(K)$ such that  \[S^{T}AS = B.\] In  this case,  the map $\mathfrak{g}(A)\to\mathfrak{g}(B)$ given by $ X\mapsto S^{- 1}XS$ is a Lie $2$-isomorphism.
\end{example}

 \subsection{ Maximal Tori and Toral Rank.}
 \begin{definition} Let $(\mathfrak{g},[2])$ be a Lie $2$-algebra. An element $t\in\mathfrak{g}$ is said to be a \textit{toral element} if $t^{[2]} = t$.  A subalgebra $\mathfrak{t}$ of $(\mathfrak{g},[2])$ is called \textit{toral} (or a torus of $\mathfrak{g}$) if the $2$-mapping is invertible on $\mathfrak{t}$.\end{definition}

 Any toral subalgebra of $\mathfrak{g}$ is abelian and admits a basis consisting of toral elements (see eg. \cite{ja}).
A torus $\mathfrak{t}$ of $\mathfrak{g}$ is called \textit{maximal} if the inclusion $\mathfrak{t}\subseteq \mathfrak{t'}$ with $\mathfrak{t'}$ toral implies $\mathfrak{t'} =\mathfrak{ t}.$

\medskip

Let $(\mathfrak{g},[2])$ be a simple Lie $2$-algebra over an algebraically closed field $K$ and let $\mathfrak{h}$ be a Cartan subalgebra.
The set of toroidal elements in $\mathfrak{h}$ generates a torus. We denote this torus by the symbol $T(\mathfrak{h}).$ The torus $T(\mathfrak{h})$ is maximal in $(\mathfrak{g},[2])$ (see \cite{SP}, Lemma 4. ).
\begin{definition}(See \cite{ST2}).
The \textbf{toral rank} of a Lie $2$-algebra $(\mathfrak{g},[2])$ is \[MT (\mathfrak{g}) := \text{max}\{\text{dim}_{K}(\mathfrak{t})\,\, |\,\, \mathfrak{t}\text{ is a torus in }\; \mathfrak{g} \} \]
\end{definition}


\section{Special Linear Lie $2$-algebra $(\mathfrak{sl}_{n+1}(K), [2])$.}
In this section we consider the Lie algebra consisting of matrices of trace zero over $K$, and we study some properties concerning about simplicity of this algebra.

\medskip

It is a known fact that the commutator of the Lie general algebra $\mathfrak{gl}_{n+1}(K)$ is a Lie subalgebra of $\mathfrak{gl}_{n+1}(K)$. This algebra is called the Lie special algebra, and it is denoted by  $\mathfrak{sl}_{n+1}(K)$, That is, 
$$\mathfrak{sl}_{n+1}(K):=[\mathfrak{gl}_{n+1}(K),\mathfrak{gl}_{n+1}(K)]=\{A\in\mathfrak{gl}_{n+1}(K):\text{tr}(A)=0\}.$$ 
It is easy to prove that 
$$\mathfrak{sl}_{n+1}(K)=\{A\in\mathfrak{gl}_{n+1}(K):\text{tr}(A)=0\}.$$ 
 A basis for $\mathfrak{sl}_{n+1}(K)$ isthe following: $$h_{k}:=e_{kk}+e_{k+1,k+1},\,\,\, k=1,...,n;\quad e_{ij},\quad i\neq j=1,2,...,n+1.$$
Let us consider the $2$-map  $[2]:\ \mathfrak{sl}_{n+1}(K)\to\mathfrak{sl}_{n+1}(K)$ given by $A^{[2]}:=A^{2}$.
\medskip

\begin{remark}
  The Lie $2$-algebra $\mathfrak{sl}_{2}(K)$ is not simple, since \[\mathfrak{sl}_{2}(K)^{(1)}=[\mathfrak{sl}_{2}(K),\mathfrak{sl}_{2}(K)]=\text{span}\{ h_1\}\] then $\mathfrak{sl}_{2}(K)^{(1)}$ is a nontrivial ideal of  $\mathfrak{sl}_{2}(K).$ In next theorem, we consider the case where $n\geq 2$.
\end{remark}

\begin{theorem}
\label{isa 1}
The special Lie algebra $\mathfrak{sl}_{n+1}(K)$ has the following properties:

\begin{enumerate}
\item  $(\mathfrak{sl}_{n+1}(K),[2])$ is a Lie $2$-algebra.
\item If $n\geq2$ and $(n+1)\not\equiv0(\text{mod}\,2)$, then  $\mathfrak{sl}_{n+1}(K)$ is a simple Lie $2$-algebra.
\item   $\mathfrak{psl}_{2n}(K):=\mathfrak{sl}_{2n}(K)/\mathfrak{z}(\mathfrak{gl}_{2n}(K))$, $n\geq1$ is a simple Lie $2$-algebra.
\end{enumerate}
\end{theorem}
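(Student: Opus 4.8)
\textbf{Proof plan for Theorem \ref{isa 1}.}

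The plan is to handle the three statements in order, leaning on the fact that $\mathfrak{sl}_{n+1}(K)$ sits inside $\mathfrak{gl}_{n+1}(K)=\mathrm{Lie}(\mathrm{End}(V))$, which is a Lie $2$-algebra by Example \ref{EX3}. For (1), I would show that $\mathfrak{sl}_{n+1}(K)$ is closed under $A\mapsto A^{2}$: since $\mathrm{tr}(A)=0$ and we are in characteristic $2$, one computes $\mathrm{tr}(A^{2})=\sum_{i,j}a_{ij}a_{ji}=\sum_i a_{ii}^2+2\sum_{i<j}a_{ij}a_{ji}=\bigl(\sum_i a_{ii}\bigr)^2=\mathrm{tr}(A)^2=0$ (using the Frobenius/freshman's-dream identity in characteristic $2$). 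Hence the restriction of the squaring map is a well-defined $2$-map on $\mathfrak{sl}_{n+1}(K)$, and axioms (1)--(3) of the definition are inherited from $\mathfrak{gl}_{n+1}(K)$.

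For (2), assuming $n\ge 2$ and $n+1$ odd (so the center $\mathfrak{z}(\mathfrak{gl}_{n+1}(K))=K\cdot I_{n+1}$ intersects $\mathfrak{sl}_{n+1}(K)$ trivially, because $\mathrm{tr}(I_{n+1})=n+1\neq 0$ in $K$), I would prove simplicity by the standard ideal-chasing argument adapted to characteristic $2$: let $\mathfrak{a}$ be a nonzero ideal, pick $0\neq X\in\mathfrak{a}$, and repeatedly bracket with the elements $e_{ij}$ and $h_k$ of the given basis. Bracketing with a suitable $e_{ij}$ produces a nonzero multiple of some $e_{kl}$ ($k\neq l$) in $\mathfrak{a}$; then $[e_{kl},e_{lk}]=e_{kk}+e_{ll}=h$-type element lands in $\mathfrak{a}$, and from a single off-diagonal $e_{kl}$ one sweeps out all $e_{ij}$ via $[e_{ki},e_{jl}]$-type brackets, and then all the $h_k$. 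The place where characteristic $2$ (and the parity hypothesis) really bites is controlling what happens when $X$ is diagonal: one needs that $\mathrm{ad}(X)$ does not annihilate every $e_{ij}$, i.e.\ that the "root values" $x_i-x_j$ are not all zero, which forces $X\in\mathfrak{z}(\mathfrak{gl}_{n+1}(K))\cap\mathfrak{sl}_{n+1}(K)=0$ precisely because $n+1$ is odd.

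For (3), when $n+1=2n$ is even the scalar matrix $I_{2n}$ does lie in $\mathfrak{sl}_{2n}(K)$, so $\mathfrak{z}:=\mathfrak{z}(\mathfrak{gl}_{2n}(K))=K\cdot I_{2n}$ is a (central, hence ideal) subspace of $\mathfrak{sl}_{2n}(K)$; I would form the quotient $\mathfrak{psl}_{2n}(K)=\mathfrak{sl}_{2n}(K)/\mathfrak{z}$, note that the $2$-map descends because $I_{2n}^{[2]}=I_{2n}\in\mathfrak{z}$ and the $2$-map is additive modulo brackets, giving a Lie $2$-algebra structure on the quotient, and then run the same ideal-chasing argument: the preimage in $\mathfrak{sl}_{2n}(K)$ of a nonzero ideal of the quotient is an ideal strictly containing $\mathfrak{z}$, and the computation above shows the only such ideal is all of $\mathfrak{sl}_{2n}(K)$ (the diagonal obstruction is now exactly $\mathfrak{z}$, which has been quotiented out), so the quotient is simple.

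The main obstacle I anticipate is not any single computation but organizing the ideal-chasing cleanly enough to cover all cases of the starting element $X$ uniformly in characteristic $2$ — in particular making sure the brackets $[e_{ij},e_{kl}]$ one uses are genuinely nonzero (coefficients like $\delta_{jk}-\delta_{li}$ can collapse mod $2$ in low rank, which is exactly why $n\ge 2$ is needed), and correctly identifying that the diagonal part of any ideal is forced into $\mathfrak{z}(\mathfrak{gl}_{n+1}(K))$. Everything else (the trace computation for (1), the descent of the $2$-map for (3)) is routine.
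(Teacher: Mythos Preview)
Your plan for parts (1) and (3) matches the paper's proof closely: the paper also reduces (1) to the single identity $\mathrm{tr}(A^{2})=\mathrm{tr}(A)^{2}$, and for (3) it likewise lifts a nonzero ideal of the quotient to an ideal $I\supsetneq\mathfrak{z}$ of $\mathfrak{sl}_{2n}(K)$, locates an off-diagonal $e_{kl}$ in $I$ ``by direct computations'', and then generates all of $\mathfrak{sl}_{2n}(K)$ from it via brackets such as $[e_{kl},e_{lk}]=e_{kk}+e_{ll}$.

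For part (2), however, your route genuinely differs from the paper's. You propose a direct ideal-chasing argument inside $\mathfrak{sl}_{n+1}(K)$: start from any nonzero $X$, produce an off-diagonal $e_{kl}$, sweep out all root vectors, and handle the purely diagonal case by observing that such an $X$ would have to lie in $\mathfrak{z}(\mathfrak{gl}_{n+1}(K))\cap\mathfrak{sl}_{n+1}(K)=0$. The paper instead uses the decomposition $\mathfrak{gl}_{n+1}(K)=\mathfrak{sl}_{n+1}(K)\oplus\mathfrak{z}(\mathfrak{gl}_{n+1}(K))$ (valid exactly when $n+1$ is odd) to show that every ideal of $\mathfrak{sl}_{n+1}(K)$ is automatically an ideal of $\mathfrak{gl}_{n+1}(K)$, and then invokes Jacobson's classical result that the only ideals of $\mathfrak{gl}_{n+1}(K)$ contained in $\mathfrak{sl}_{n+1}(K)$ are $0$ and $\mathfrak{sl}_{n+1}(K)$. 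Your approach is more elementary and self-contained, and has the advantage that the same computation is recycled verbatim for part (3); the paper's approach is shorter on the page but outsources the actual work to the literature.
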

\begin{proof}
In order to prove (1), it is enough to see that $\mathfrak{sl}_{n}(K)$ is closed by the $2$-map $[2]\colon \mathfrak{sl}_{n+1}(K)\to\mathfrak{sl}_{n+1}(K)$. But, it is an immediate consequence of the fact that   $\text{tr}(A^2)=\text{tr}(A)^2$,  for all $ A\in\mathfrak{sl}_{n+1}(K).$

\medskip

Let us prove (2).  Firstly,  we show that if  $(n+1)\not\equiv0(mod\,2)$, then $$\mathfrak{gl}_{n+1}(K)=\mathfrak{sl}_{n+1}(K)\oplus\mathfrak{z}(\mathfrak{gl}_{n+1}(K)).$$ Indeed, if $A\in\mathfrak{gl}_{n+1}(K)$ and  $\text{tr}(A)=\lambda$, then \[A=(A-\frac{\lambda}{n+1} I_{n+1})+\frac{\lambda}{n+1} I_{n+1},\] where $(A-\frac{\lambda}{n+1} I_{n+1})\in\mathfrak{sl}_{n+1}(K)$ and $\frac{\lambda}{n+1} I_{n+1}\in\mathfrak{z}(\mathfrak{gl}_{n+1}(K)).$ 
If $A\in\mathfrak{sl}_{n+1}(K)\cap\mathfrak{z}(\mathfrak{gl}_{n+1}(K))$, then $A=\lambda I_{n+1}$ and $\text{tr}(A)=(n+1)\lambda=0$. Since $n+1$  is not a multiple of $2$, we have $\lambda=0$. Therefore,  $\mathfrak{sl}_{n+1}(K)\cap\mathfrak{z}(\mathfrak{gl}_{n+1}(K))=\{0\}$. Now, let 
 $I$ be an ideal of $\mathfrak{sl}_{n+1}(K)$. Then
\begin{equation*}
\begin{split}
[\mathfrak{gl}_{n}(K),I] &=[\mathfrak{sl}_{n+1}(K)\oplus\mathfrak{z}(\mathfrak{gl}_{n+1}(K),I]\\
&=[\mathfrak{sl}_{n+1}(K),I]+[\mathfrak{z}(\mathfrak{gl}_{n+1}(K)),I]\\
                    &\subseteq I + 0=I.
 \end{split}
  \end{equation*} 
Therefore, $I$ is also  an ideal of $\mathfrak{gl}_{n}(K)$. However,  the only ideals of $\mathfrak{gl}_{n}(K)$   contained in
 $\mathfrak{sl}_{n+1}(K)$ are $\{0\}$ and $\mathfrak{sl}_{n+1}(K)$ (see \cite{ja1}). Then,  $I=\{0\}$ and $I=\mathfrak{sl}_{n+1}(K)$. Hence, $\mathfrak{sl}_{n+1}(K)$ is a simple Lie $2$-algebra. 
 
 \medskip
 
We now prove (3). If  $(n+1)\equiv0(\text{mod}\ 2)$, then $\mathfrak{z}(\mathfrak{gl}_{n+1}(K))\subseteq \mathfrak{sl}_{n+1}(K)$ is an ideal of $\mathfrak{sl}_{n+1}(K)$. Therefore,  $\mathfrak{sl}_{n+1}(K)/\mathfrak{z}(\mathfrak{gl}_{n+1}(K))$ is a Lie $2$-algebra with $2$-map given by $$(x+\mathfrak{z}(\mathfrak{gl}_{n+1}(K)))^{[2]}:=x^{[2]}+\mathfrak{z}(\mathfrak{gl}_{n+1}(K)), \quad\text{for all }x\in \mathfrak{sl}_{n+1}(K).$$ 
Now, if  $J$ is another ideal of  $\mathfrak{sl}_{n+1}(K)/\mathfrak{z}(\mathfrak{gl}_{n+1}(K))$, then $J=I/\mathfrak{z}(\mathfrak{gl}_{n+1}(K))$, where $I$ is an ideal of $\mathfrak{sl}_{n+1}(K)$ and $\mathfrak{z}(\mathfrak{gl}_{n+1}(K))\subseteq I.$ Suppose that $I\neq\mathfrak{z}(\mathfrak{gl}_{n+1}(K))$. Then, by direct computations,  we find that $e_{kl}\in I$, with $k\neq l$. Using the identities
\begin{align*}
[e_{kl},e_{lk}]&:=e_{kk}+e_{ll},\quad k\neq l\\ 
[e_{k+1,l},e_{l,k+1}]&:=e_{k+1,k+1}+e_{ll},\quad k\neq l
\end{align*}
we obtain that $h_{k}:=e_{k,k}+e_{k+1,k+1}\in I$ for all $k=1,2,...,n$. Therefore, $I=\mathfrak{sl}_{n+1}(K),$  and $\mathfrak{sl}_{n+1}(K)/\mathfrak{z}(\mathfrak{gl}_{n+1}(K))$ is a simple Lie $2$
-algebra.
\end{proof}

\medskip


Recall some well known facts about quadratic forms over an algebraically closed field of characteristic $2$ and its corresponding Lie algebras.
Let $V$ be a $n$-dimensional $K$-space and $b : V\times V \rightarrow K$ be a non-degenerate symmetric bilinear form. This means that $b(x, y) = b(y, x)$, for all $x, y \in V$
and $b(x, V ) = 0$ implies $x = 0$. A non-degenerate symmetric bilinear form $b$ is called \textit{symplectic} if $b(x, x) = 0$. Otherwise, it is called an \textit{orthogonal} bilinear form. 

\section{The Lie $2$-algebra
$(\mathfrak{g}(V,b),[2])$  with $b$ symplectic bilinear form}

In this section we study the simplicity of Lie algebra which preserves a bilinear symplectic form over $K$.\\

Let $ b: V \times V \rightarrow K $ be a symplectic bilinear form. From Example \ref{EX3},  we have that $\mathfrak{g}(V, b)$ is a Lie $2$-algebra. We denote this algebra  by $\mathfrak{sp}(V,b)$, and is called the symplectic Lie $2$-algebra. In \cite{k}, it is shown that the dimension of $V$ is even, that is,  $ n = 2m $ and there exists a basis $\beta$ of $V$ in which $b$ has Gram matrix
\[J_{2m}:= \left( \begin{array}{ccc} 
0 & I_{m}  \\
-I_{m} & 0 \\
 \end{array}\right).\]
 
 \medskip
 
 The  Lie $2$-algebra $\mathfrak{sp}(V,b)$ is isomorphic to the Lie $2$-algebra
  \[\mathfrak{sp}_ {2m}(K):=\mathfrak{g}(J_{2m})= \left\{ \left(\begin{array}{cc} 
a & b \\
c & a^{T} \end{array}\right):a,b,c\in \mathfrak{gl}_{m}(K),\, b,\,\,c\,\, \text{symmetric}\right\},\] 
which has dimension $2m^{2} +m$ and a basis consisting of the following elements:
\medskip
\begin{align*} 
d_i&:=e_{ii}+e_{m+i,m+i}, \hspace{0.8cm} (1\leq i\leq m).\\
a_{ij}&:=e_{ij}+e_{m+j,m+i}, \hspace{0.8cm} (1\leq i,j\leq m, i\neq j).\\
b_{ij}&:=e_{i,j+m}+e_{j,i+m}, \hspace{0.8cm} (1\leq i<j\leq m, i\neq j).\\
b_i&:=e_{i,i+m}, \hspace{2.2cm} (1\leq i\leq m).\\
c_{ij}&:=e_{i+m,j}+e_{j+m,i}, \hspace{0.8cm} (1\leq i<j\leq m, i\neq j).\\
c_i&:=e_{i+m,i}, \hspace{2.2cm} (1\leq i\leq m).
\end{align*}

\medskip

 The Lie bracket of $\mathfrak{sp}_ {2m}(K)$ is given in Table 1, where the elements of the diagonal are results of the $2$-map  in the elements of their rows and corresponding columns.
 
   \begin{table}[!ht]
    \begin{center}
   \begin{tabular}{|c||c|c|c|c|c|c|}
   \hline
    & $ d_i$ & $a_{ij}$ & $b_{ij}$ & $c_{ij}$ & $b_i$ & $c_{i}$\\
   \hline\hline
   $d_i$  &$d_i$ &$a_{ij}$ &$b_{ij}$ &$c_{ij}$ &$0$ &$0$\\
   \hline
   $a_{ij}$ &$a_{ij}$ &$0$ &$0$ &$0$ &$0$ &$a_{ij}$\\
   \hline
   $b_{ij}$ &$b_{ij}$ & $0$ & $0$ & $d_i+d_j$ & $0$ &$a_{ij}$\\
   \hline
   $c_{ij}$ &$c_{ij}$ &$0$ &$d_i+d_j$ &$0$ &$a_{ij}$ &$0$\\
   \hline
   $b_i$ & $0$ &$0$ &$0$ &$a_{ij}$  &$0$  &$d_i$\\
   \hline
   $c_i$ & $0$ &$c_{ij}$ &$a_{ij}$ &$0$ &$d_i$  &$0$\\
   \hline
 \end{tabular}
  \caption{The Lie $2$-algebra $\mathfrak{sp}_{2m}(K).$}
 \end{center}
 \label{TB1}
 \end{table}
 We now calculate the derived algebras of $\mathfrak{sp}_{2m}(K)$, and then, we show that the second derived algebra is a Lie $2$-algebra whenever $2$ does not divided $m$ and $m\geq 3$.
 
 \begin{remark}

 For $m=1$, we have  $\mathfrak{sp}_{2}(K)=\text{span}\{d_1,b_{12},c_{21}\}=\mathfrak{sl}_{2}(K).$ Then
 $$\mathfrak{sp}_{2}(K)^{(1)}=\text{span}\{h_1\}=\mathfrak{z}(\mathfrak{gl}_2(K))\quad\text{ and }\quad \mathfrak{sp}_{2}(K)^{(2)}=\{0\}, $$   
 and for $m=2$, we have  $\mathfrak{sp}_{4}(K)=\text{span}\{d_{1},d_{2},a_{12},a_{21},b_{12}, b_{1},b_{2}, c_{12}, c_{1}, c_2 \}$.
By direct computations we obtain that
\begin{align*}
\mathfrak{sp}_{4}(K)^{(1)}&=\text{span}\{d_{1},d_{2},a_{12},a_{21}, b_{12}, c_{12}\}.\\
\mathfrak{sp}_{4}(K)^{(2)}&=\text{span}\{d_1+d_2, a_{12}, a_{21}, b_{12},c_{12}\}.\\
\mathfrak{sp}_{4}(K)^{(3)}&=\mathfrak{z}(\mathfrak{gl}_{4}(K))\quad \text{and}\quad \mathfrak{sp}_{4}(K)^{(4)}=\{0\}.
\end{align*}

Therefore, if $m=1,2$ then $\mathfrak{sp}_{2m}(K)$ is a solvable Lie $2$-algebra.
\end{remark}
\medskip

\begin{lemma}
If $m\geq3$, then:
\begin{enumerate}
 \item  $\mathfrak{sp}_{2m}(K)^{(1)}= \left\{ \left(\begin{array}{cc} 
a & b \\
c & a^{T} \end{array}\right):b,c\in \text{Alt}_{m}(K), a\in \mathfrak{gl}_{m}(K)  \right\}$. 
 \item  $\mathfrak{sp}_{2m}(K)^{(2)}=\left\{ \left(\begin{array}{cc} 
a & b \\
c & a^{T} \end{array}\right):b,c\in \text{Alt}_{m}(K)\quad \text{and}\quad  \text{tr}(a)=0\right\}$
 \item  $\mathfrak{sp}_{2m}(K)^{(3)}=\mathfrak{sp}_{2m}(K)^{(2)},$

 \end{enumerate} 
 where  $\text{Alt}_{m}(K)$ is the set of alternating $m\times m$-matrices  with entries in $K$.
\end{lemma}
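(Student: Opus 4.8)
The plan is to compute the derived series of $\mathfrak{sp}_{2m}(K)$ directly from the multiplication Table~1, working in the given basis $\{d_i, a_{ij}, b_{ij}, b_i, c_{ij}, c_i\}$. First I would identify which basis vectors lie in the various derived algebras. For part (1), observe that the off-diagonal blocks $b$ and $c$ of an element of $\mathfrak{sp}_{2m}(K)$ are arbitrary symmetric matrices, but in characteristic $2$ a symmetric matrix with zero diagonal is precisely an alternating matrix; so the claim is that bracketing kills exactly the diagonal entries $b_i = e_{i,i+m}$ and $c_i = e_{i+m,i}$ of those blocks, while every $a_{ij}$, $d_i$, $b_{ij}$ ($i<j$) and $c_{ij}$ ($i<j$) is recovered as a bracket. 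From the table, $[d_i, a_{ij}] = a_{ij}$, $[d_i, b_{ij}] = b_{ij}$, $[d_i, c_{ij}] = c_{ij}$ give back all the strictly-off-diagonal generators, and $[b_{ij}, c_{ij}] = d_i + d_j$ together with (for $m \ge 3$) a third index $k$ gives $d_i + d_k$ and hence, taking differences, shows $\operatorname{span}\{d_i + d_j\}$ is in the derived algebra; to get the full span of the $d_i$ one checks the diagonal column of the table — e.g. $[b_i, c_i] = d_i$ — so actually each $d_i$ itself lies in $\mathfrak{sp}_{2m}(K)^{(1)}$. Conversely one must check that $b_i$ and $c_i$ never appear: scanning the $b_i$ and $c_i$ rows/columns of the table, every nonzero bracket involving them lands in the span of $d_i$, $a_{ij}$, or the $b_i/c_i$ themselves only via $[d_i,b_i]=0$, $[d_i,c_i]=0$ — so no bracket produces a $b_i$ or $c_i$ with nonzero coefficient. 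That establishes (1).

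For part (2), I would intersect the description from (1) with the trace-zero condition on $a$. Since $\mathfrak{sp}_{2m}(K)^{(1)}$ already has arbitrary alternating $b,c$ and arbitrary $a \in \mathfrak{gl}_m(K)$, computing $[\mathfrak{sp}_{2m}(K)^{(1)}, \mathfrak{sp}_{2m}(K)^{(1)}]$ amounts to: the alternating blocks are still freely reachable (same brackets as before, now available inside the derived algebra), and the diagonal-block part reduces to $[\mathfrak{gl}_m(K), \mathfrak{gl}_m(K)] = \mathfrak{sl}_m(K)$ in the $a$-slot, i.e. the condition $\operatorname{tr}(a) = 0$. One subtlety: the bracket $[b_{ij}, c_{ij}] = d_i + d_j$ contributes diagonal terms with trace $2 = 0$, consistent with landing in the trace-zero part, and one must confirm that no combination of brackets escapes $\operatorname{tr}(a)=0$ — which is automatic because the whole derived algebra of any Lie algebra maps into $\mathfrak{sl}$ under the trace, but here it is cleanest to just note $\operatorname{tr}([X,Y])=0$ always. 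Conversely every trace-zero diagonal block is a sum of $a_{ij}$'s and $h_k = d_k + (\text{correction})$ — more precisely the $a_{ij}$ together with the $d_i+d_j$ span exactly the $\operatorname{tr}=0$ block, using $m \ge 3$ again to get all the $d_i + d_j$. So $\mathfrak{sp}_{2m}(K)^{(2)}$ is as claimed.

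For part (3), the point is that $\mathfrak{sp}_{2m}(K)^{(2)}$ is perfect. Here $a$ ranges over $\mathfrak{sl}_m(K)$ and $b,c$ over $\operatorname{Alt}_m(K)$. I would show $[\mathfrak{sp}_{2m}(K)^{(2)}, \mathfrak{sp}_{2m}(K)^{(2)}] = \mathfrak{sp}_{2m}(K)^{(2)}$ by exhibiting each basis vector of $\mathfrak{sp}_{2m}(K)^{(2)}$ — namely the $a_{ij}$, the $b_{ij}$, the $c_{ij}$, and the $d_i + d_j$ — as a bracket of two elements that are themselves in $\mathfrak{sp}_{2m}(K)^{(2)}$. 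The elements $d_i + d_j$, $a_{ij}$, $b_{ij}$, $c_{ij}$ are all available, and the same bracket relations used above ($[d_i+d_j, a_{ij}] = a_{ij}$ since $[d_i,a_{ij}]=a_{ij}$ and $[d_j,a_{ij}]=0$ or $=a_{ij}$ depending on index overlap — this needs the index bookkeeping done carefully, which is where $m \ge 3$ matters so that a third free index is always available; similarly $[d_i+d_j, b_{ij}]=b_{ij}$, $[d_i+d_j,c_{ij}]=c_{ij}$, and $[b_{ij},c_{ij}] = d_i+d_j$) do the job.

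The main obstacle I anticipate is the index-chasing in characteristic $2$: keeping track of when brackets like $[d_i, a_{jk}]$ vanish versus return $a_{jk}$ (the table entries are schematic, with the indices on the right-hand side implicitly constrained), and in particular making sure that in part (3) every $d_i+d_j$ and every $a_{ij}$ really is a bracket of two elements both lying in $\mathfrak{sp}_{2m}(K)^{(2)}$ rather than needing an element with nonzero trace or a diagonal $b_i,c_i$; the hypothesis $m \ge 3$ is exactly what guarantees enough room (a spectator index) to realize all these brackets internally. Once the bookkeeping is set up as a small finite check on index patterns $i=j$, $i\ne j$, with/without a third index, the three parts follow. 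I would present (1) in full detail and then note that (2) and (3) follow by the same method applied to the successively smaller algebras, since $\mathfrak{sl}_m(K)$ being perfect for $m \ge 3$ (characteristic $2$, $m$ odd is not even needed here — $\mathfrak{sl}_m$ is perfect for $m \ge 3$ in any characteristic) is the underlying reason (3) stabilizes.
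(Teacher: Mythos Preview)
Your overall strategy---work in the basis and use Table~1---matches what the paper does for parts~(2) and~(3). For part~(1), the paper takes a different route for the inclusion $\mathfrak{sp}_{2m}(K)^{(1)}\subseteq\mathfrak{g}_1$: instead of scanning basis brackets, it writes general $\alpha,\beta\in\mathfrak{sp}_{2m}(K)$ in $2\times2$ block form, computes $[\alpha,\beta]$ explicitly, and checks by a short index calculation that the diagonal entries of the off-diagonal blocks vanish. This is more economical than your table scan, because Table~1 is schematic (it only records brackets of basis vectors with coincident index labels), so your approach obliges you to enumerate all index configurations $[a_{ij},b_{kl}]$, $[a_{ij},b_k]$, etc., whereas the block computation handles them all at once. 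For the reverse inclusion the paper uses $[b_i,c_i]=d_i$ and $[b_i,c_{ij}]=a_{ij}$ to get the diagonal block and a surjectivity argument $\varphi\colon a\mapsto a+a^T$ from $\mathfrak{gl}_m(K)$ onto $\mathrm{Alt}_m(K)$ to get the alternating off-diagonal blocks; your brackets $[d_i,b_{ij}]=b_{ij}$ etc.\ work equally well there.

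There is one concrete error in your part~(3) sketch. In characteristic~$2$ one has $[d_j,a_{ij}]=a_{ij}$ (not $0$), hence $[d_i+d_j,a_{ij}]=2a_{ij}=0$; by the same token $[d_i+d_j,b_{ij}]=[d_i+d_j,c_{ij}]=0$. So none of the three brackets you wrote down recover anything. The repair is exactly the spectator index you allude to: for $k\notin\{i,j\}$ (available because $m\ge3$) one has $[d_k,a_{ij}]=0$, so $[d_i+d_k,a_{ij}]=a_{ij}$, and similarly for $b_{ij}$ and $c_{ij}$. Your instinct about where the hypothesis $m\ge3$ enters is correct, but the specific brackets you proposed all vanish; once you substitute the three-index versions the argument goes through.
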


\begin{proof}
To prove (1), set $\mathfrak{g}_1:=\left\{ \left(\begin{array}{cc} 
a & b \\
c & a^{T} \end{array}\right):b,c\in \text{Alt}_{m}(K) \right\}$
and and take $\alpha$, $\beta$ in $\mathfrak{sp}_{2m}(K)$. Then there are $a$, $\bar{a}$, $b$, $\bar{b}$, $c$ and $\bar{c}$ in  $\mathfrak{gl}_{m}(K)$, where $b$, $\bar{b}$ $c$ and $\bar{c}$ are symmetric matrices, such that:
\[\alpha=\left(\begin{array}{cc} 
a & b \\
c & a^{T} \end{array}\right)\quad\quad\text{ and }\quad \quad\beta=\left(\begin{array}{cc} 
\bar{a} & \bar{b} \\
\bar{c} & \bar{a}^{T} \end{array}\right).\] Then 
$$[\alpha,\beta]=  \left(\begin{array}{cc} 
                                                           a\bar{a}+b\bar{c}+\bar{a}a+\bar{a}c & a\bar{b}+b\bar{a}^{T}+\bar{a}b+\bar{b}a^{T}\\
                                                           c\bar{a}+a^{T}\bar{c}+\bar{c}a+\bar{a}^{T}c
                                                           & c\bar{b}+a^{T}\bar{a}^{T}+\bar{c}b+\bar{a}^{T}a^{T}\end{array}\right).$$ Since $b$ y $\bar{b}$ are symmetric matrices, we have
  \[(a\bar{a}+b\bar{c}+\bar{a}a+\bar{b}c)^{T}=c\bar{b}+a^{T}\bar{a}^{T}+\bar{c}b+\bar{a}^{T}a^{T}\]
  \[(a\bar{b}+b\bar{a}^{T}+\bar{a}b+\bar{b}a^{T})^{T}=a\bar{b}+b\bar{a}^{T}+\bar{a}b+\bar{b}a^{T}\]  and 
  \[ (a\bar{b}+b\bar{a}^{T}+\bar{a}b+\bar{b}a^{T})_{ii}=\sum_{j=1}^{m} (a_{ij}\bar{b}_{ji}+b_{ij}\bar{a}_{ij}+\bar{a}_{ij}b_{ji}+\bar{b}_{ij}a_{ij})=0.\]                                            
Analogously, the symmetry of $c$ and $\bar{c}$ imply  
$$(c\bar{a}+a^{T}\bar{c}+\bar{c}a+\bar{a}^{T}c)^{T}= c\bar{a}+a^{T}\bar{c}+\bar{c}a+\bar{a}^{T}c,$$
$$(c\bar{b}+a^{T}\bar{a}^{T}+ \bar{c}b+\bar{a}^{T}a^{T})^{T}= c\bar{b}+a^{T}\bar{a}^{T}+\bar{c}b+\bar{a}^{T}a^{T},$$
$$(c\bar{a}+a^{T}\bar{c}+\bar{c}a+\bar{a}^{T}c)_{ii}=0.$$ Therefore, $(a\bar{b}+b\bar{a}^{T}+\bar{a}b+\bar{b}a^{T})$ and $(c\bar{a}+a^{T}\bar{c}+\bar{c}a+\bar{a}^{T}c)$ belong to $\text{Alt}_{m}(K)$. So,  $\mathfrak{sp}_{2m}(K)^{(1)}\subseteq \mathfrak{g}_1$. \\

Now, we show that  $\mathfrak{g}_1\subseteq\mathfrak{sp}_{2m}(K)^{(1)}$. Given $a=(x_{ij})\in\mathfrak{gl}_{m}(K) $ we have
$$\left(\begin{array}{cc} a & 0\\
0 & a^{T} \end{array}\right)=\\\sum_{i=1}^{m}x_{ii}\left[b_i, c_i\right]+\sum_{i\neq j}^{m}x_{i j}\left[b_i, c_{ij}\right]\in\mathfrak{sp}_{2m}(K)^{(1)}.$$\\
Let us consider the linear map $\varphi:\mathfrak{gl}_{m}(K)\rightarrow \text{Alt}_{m}(K)$ given by  $a\mapsto a+a^{T}$. Since $\text{Ker}(\varphi)=\{a\in\mathfrak{gl}_{m}(K):a\text{ is  symmetric}\}$ and $$\text{dim}_K(\text{Im}(\varphi))=m^{2}-\frac{m(m+1)}{2}=\frac{m(m-1)}{2}=\text{dim}_K(\text{Alt}_{m}(K)), $$ 
we conclude that $\text{Im}(\varphi)=\text{Alt}_{m}(K).$ That is, $\varphi$ is a surjective map. 
Then, given $b\in \text{Alt}_{m}(K)$ there exists  $a\in\mathfrak{gl}_{m}(K)$ such that,  $a+a^{T}=b$. Hence,  $$\left(\begin{array}{cc}0 & b\\0 & 0\end{array}\right)=\left(\begin{array}{cc}0 & a+a^{T}\\0 & 0\end{array}\right)=\left[\left(\begin{array}{cc}a & 0\\0 & a^{T}\end{array}\right),\left(\begin{array}{cc} 0 & I \\ 0 & 0\end{array}\right)\right] \in\mathfrak{sp}_{2m}(K)^{(1)}.$$  Similarly, we prove that $\left(\begin{array}{cc}0 & 0\\c & 0\end{array}\right)\in\mathfrak{sp}_{2m}(K)^{(1)}.$  Therefore,   $\mathfrak{g}_1\subseteq\mathfrak{sp}_{2m}(K)^{(1)}$.

\medskip

To prove (2), let $:\mathfrak{g}_2= \left\{ \left(\begin{array}{cc} 
a & b \\
c & a^{T} \end{array}\right):b,c\in \text{Alt}_{m}(K)\quad \text{and}\quad\text{tr}(a)=0\right\}$. We will prove that  $\mathfrak{sp}_{2m}(K)^{(2)}= \mathfrak{g}_2$. From the description of $\mathfrak{sp}_{2m}(K)^{(1)}$ in (1), we deduce that the Lie algebra $\mathfrak{sp}_{2m}(K)^{(1)}$ is generated by $a_{ij},  b_{ij},  c_{ij}$ and $d_i$ for $1\leq i, j\leq m$. Therefore, $\mathfrak{sp}_{2m}(K)^{(2)} =  [\mathfrak{sp}_{2m}(K)^{(1)}, \ \mathfrak{sp}_{2m}(K)^{(1)} ] $ is generated by  $a_{ij}, \ b_{ij}, \ c_{ij}$ and $d_i+d_j$ for $1\leq i, j\leq m$. Since all of these elements belong to $\mathfrak{g}_2$, we conclude that $\mathfrak{sp}_{2m}(K)^{(2)}\subseteq\mathfrak{g}_2$. The another inclusion is established reasoning in a similar way to the proof of   (1).

\medskip

Finally, we prove (3).   In the proof of (2), it is proven that $\mathfrak{sp}_{2m}(K)^{(2)}$ is generated by $d_i+d_j$ for $1\leq i, j\leq m$. Therefore, 
$$\mathfrak{sp}_{2m}(K)^{(3)}=\text{span}\{ [x, y]  :  x, y\in \{a_{ij},  b_{ij},  c_{ij},  d_i+d_j : 1\leq i, j\leq m \}  \}.$$
From Table \ref{TB1}, we conclude that
$$\mathfrak{sp}_{2m}(K)^{(3)}=\text{span} \{a_{ij},   b_{ij},  c_{ij},  d_i+d_j : 1\leq i, j\leq m \}= \mathfrak{sp}_{2m}(K)^{(2)}.$$
\end{proof}

\medskip

\begin{lemma}
\label{lmS}
Let $I$ be a nontrivial ideal of $\mathfrak{sp}_{2m}(K)^{(2)}$, then $c_{ij},  b_{ij} \notin I$, for all $i,j$.
\end{lemma}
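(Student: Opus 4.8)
The plan is to argue by contradiction: suppose some $b_{ij}\in I$ (the case $c_{ij}\in I$ being symmetric under the transpose/interchange symmetry of $\mathfrak{sp}_{2m}(K)$, which swaps the roles of the $b$-blocks and $c$-blocks). Since $I$ is an ideal of $\mathfrak{sp}_{2m}(K)^{(2)}$, I would close $b_{ij}$ under brackets with the generators $a_{kl}$, $b_{kl}$, $c_{kl}$, $d_k+d_l$ listed in the previous lemma and read the results off Table~\ref{TB1}. The key relation is $[c_{ij},b_{ij}]=d_i+d_j$, so $d_i+d_j\in I$; then bracketing $d_i+d_j$ with $a_{kl}$, $b_{kl}$, $c_{kl}$ for indices meeting $\{i,j\}$ propagates membership to many more root vectors. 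Using the connectedness of the index interactions (for $m\ge 3$ every pair of indices can be linked through a chain), I expect to force $I$ to contain all the $a_{kl}$, $b_{kl}$, $c_{kl}$ and all $d_k+d_l$, i.e. $I=\mathfrak{sp}_{2m}(K)^{(2)}$, contradicting that $I$ is nontrivial (here ``nontrivial'' should mean proper and nonzero).

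More concretely, the first step is: from $b_{ij}\in I$ and $c_i\in\mathfrak{sp}_{2m}(K)$ — wait, $c_i\notin\mathfrak{sp}_{2m}(K)^{(2)}$, so I must only bracket with elements of $\mathfrak{sp}_{2m}(K)^{(2)}$. So instead I use $[b_{ij},c_{ij}]=d_i+d_j\in I$ (this needs $c_{ij}\in\mathfrak{sp}_{2m}(K)^{(2)}$, which holds for $i<j$). Next, $[d_i+d_j, a_{ik}]$ and $[d_i+d_j,b_{ik}]$, $[d_i+d_j,c_{ik}]$: from Table~\ref{TB1} the bracket of a diagonal element with a root vector indexed by a pair touching it returns that root vector (up to the obvious sign, which is irrelevant in characteristic $2$), so all root vectors $a_{ik},b_{ik},c_{ik}$ and $a_{jk},b_{jk},c_{jk}$ for $k\notin\{i,j\}$ land in $I$. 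Then bracketing these among themselves (e.g. $[a_{ik},c_{kj}]=a_{ij}$-type relations, $[b_{ik},c_{kj}]$, etc.) and with further diagonal sums produces the remaining $a_{kl},b_{kl},c_{kl}$ and $d_k+d_l$ for all $k,l$. The combinatorial bookkeeping is routine once one lays out which bracket in Table~\ref{TB1} moves which index.

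The main obstacle I anticipate is purely organizational rather than deep: one must be careful that every element used as the second argument of a bracket genuinely lies in $\mathfrak{sp}_{2m}(K)^{(2)}$ (so $d_k$ alone and $b_k,c_k$ are forbidden, only $d_k+d_l$, $a_{kl}$, $b_{kl}$, $c_{kl}$ are allowed), and that the hypothesis $m\ge 3$ is used exactly where needed — namely to guarantee a ``spare'' index $k\notin\{i,j\}$ so that the propagation step $[d_i+d_j,\cdot]$ has something to act on and so that the index graph is connected. For $m=2$ one sees from the preceding remark that $\mathfrak{sp}_4(K)^{(2)}$ has a different, smaller structure, which is consistent with the lemma needing $m\ge 3$. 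A clean way to present the argument is: (i) show $d_i+d_j\in I$; (ii) show all $a_{kl}\in I$; (iii) show all $b_{kl},c_{kl}\in I$; (iv) show all $d_k+d_l\in I$; conclude $I=\mathfrak{sp}_{2m}(K)^{(2)}$, contradicting nontriviality. I would only spell out one representative bracket computation in each step and invoke Table~\ref{TB1} and the symmetry $b\leftrightarrow c$ for the rest.
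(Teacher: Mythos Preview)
Your proposal is correct and follows essentially the same route as the paper's proof: assume some $c_{ij}$ (or $b_{ij}$) lies in $I$, obtain $d_i+d_j=[c_{ij},b_{ij}]\in I$, then bracket $d_i+d_j$ against $a_{ik},b_{ik},c_{ik}$ (using a spare index $k$, which requires $m\ge 3$) and propagate via identities like $[a_{il},c_{ik}]=c_{lk}$, $[b_{il},c_{ik}]=a_{lk}$ to force all generators of $\mathfrak{sp}_{2m}(K)^{(2)}$ into $I$. Your care about only bracketing with elements actually in $\mathfrak{sp}_{2m}(K)^{(2)}$ (avoiding $b_i,c_i,d_i$ alone) is exactly the right hygiene, and the paper proceeds the same way.
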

\begin{proof}
Let $1\leq i\neq j\leq n$ fixed.  If  $c_{ij}\in I$, then for all $k\neq j$, the relations $[c_{ij},b_{ij}]=d_i+d_j, \ [d_{i}+d_j,a_{ik}]=a_{ik}, \ [d_{i}+d_j,c_{ik}]=c_{ik}$,  and $[d_{i}+d_j, b_{ik}]=b_{ik}$ imply $a_{ik}, \  b_{ik}$, \  $d_{i}+d_k$ belong to $I$ for all $1\leq k\leq m$. Since $I$ is an ideal of $\mathfrak{sp}_{2m}(K)^{(2)}$,  for all $l, k$ with $l\neq k$, we have $[a_{il},c_{ik}]=c_{lk}, [b_{il},c_{ik}]=a_{lk}$ and $[d_{i}+d_l,b_{lk}]=b_{lk}$ belong to $I$. Therefore, $I=\mathfrak{sp}_{2m}(K)^{(2)}$ which is a contradiction. Similarly, if suppose that $b_{ij}\in I$, we arrive to a contradiction. Hence, $c_{ij},  b_{ij} \notin I$ for all $i, j$.
\end{proof}

\medskip

\begin{theorem} 
\label{isa 2}
Let $b:V\times V\rightarrow K$ be a  symplectic  bilinear form and let $\mathfrak{sp}(V,b)$ be the sympletic Lie algebra associated to $b$. 
  Suppose that  $\text{dim}_k(V)=n>4$,  then:
  \begin{enumerate}
 \item  $(\mathfrak{sp}(V,b)^{(2)},2)$ is a Lie $2$-algebra.
 \item  If $4\mid n$, then  $\mathfrak{sp}(V,b)^{(2)}/\mathfrak{z}(\mathfrak{gl}(V))$ is simple.
 \item  If $4\nmid n$, then $\mathfrak{sp}(V,b)^{(2)}$ is simple.
 \end{enumerate}
\end{theorem}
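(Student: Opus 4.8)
The plan is to work with the matricial model $\mathfrak{sp}_{2m}(K)$, so that $n=2m$ with $m\geq 3$, and to exploit the explicit description of $\mathfrak{sp}_{2m}(K)^{(2)}$ and its multiplication table obtained in the preceding two lemmas. For part (1), I would argue exactly as in the proof that $\mathfrak{sp}_{2m}(K)$ itself is a Lie $2$-algebra: by Example \ref{EX3} the squaring map is a $2$-map on $\mathfrak{gl}_{2m}(K)$, it restricts to $\mathfrak{sp}_{2m}(K)=\mathfrak{g}(J_{2m})$, and it remains to check that $\mathfrak{sp}_{2m}(K)^{(2)}$ is closed under squaring. Using the block description from Lemma~2.9(2), if $X=\left(\begin{smallmatrix} a & b\\ c & a^{T}\end{smallmatrix}\right)$ with $b,c\in\mathrm{Alt}_m(K)$ and $\mathrm{tr}(a)=0$, then $X^{2}=\left(\begin{smallmatrix} a^{2}+bc & ab+ba^{T}\\ ca+a^{T}c & a^{T}{}^{2}+cb\end{smallmatrix}\right)$; one checks $ab+ba^{T}$ and $ca+a^{T}c$ are alternating (their diagonal entries vanish in characteristic $2$ since $b,c$ are alternating, and transposing swaps the two blocks) and that $\mathrm{tr}(a^{2}+bc)=\mathrm{tr}(a)^{2}+\mathrm{tr}(bc)=0$, the last trace vanishing because $b,c$ are alternating (so $bc$ has zero trace in characteristic $2$). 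Hence $X^{[2]}\in\mathfrak{sp}_{2m}(K)^{(2)}$.

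For parts (2) and (3), write $L:=\mathfrak{sp}_{2m}(K)^{(2)}$ and $Z:=\mathfrak{z}(\mathfrak{gl}_{2m}(K))=K I_{2m}$. First I would record when $Z\subseteq L$: since $I_{2m}=\sum_i d_i$ and $\mathrm{tr}(I_m)=m$, we have $I_{2m}\in L$ iff $m\equiv 0\pmod 2$, i.e. iff $4\mid n$; in that case $Z$ is a central, hence ideal, of $L$ and the quotient $L/Z$ is a Lie $2$-algebra by the usual induced $2$-map, while if $4\nmid n$ then $Z\cap L=0$. The core of the argument is a simplicity computation: let $I$ be a nonzero ideal of $L$ (respectively of $L/Z$, which we lift to an ideal $I$ of $L$ with $Z\subseteq I$). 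Pick $0\neq X=\left(\begin{smallmatrix} a & b\\ c & a^{T}\end{smallmatrix}\right)\in I$ (with $X\notin Z$ in the quotient case) and bracket it against the basis elements $b_i,c_i,a_{ij},d_i$ of the ambient $\mathfrak{sp}_{2m}(K)$ — note $[\,\mathfrak{sp}_{2m}(K),I\,]\subseteq I$ since $I\triangleleft L$ and $[\mathfrak{sp}_{2m}(K),L]\subseteq L$ — to show that $I$ contains some $b_{ij}$ or $c_{ij}$, or else (only possible in the $4\mid n$ case) that $I\subseteq Z$. Concretely, using the bracket relations $[b_i,c_i]=d_i$, $[d_i,a_{ij}]=a_{ij}$, $[c_{ij},b_i]=a_{ij}$, etc., from Table~1, one reduces a general $X$ first to a nonzero element supported in the off-diagonal blocks, then via $[c_{ij},\,\cdot\,]$ and $[b_{ij},\,\cdot\,]$ to a single $b_{ij}$ or $c_{ij}$. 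Once $I$ contains some $b_{ij}$ or $c_{ij}$, Lemma~\ref{lmS} (whose proof shows such an element generates all of $L$) forces $I=L$ in the $4\nmid n$ case, and $I=L$, i.e. $I/Z=L/Z$, in the $4\mid n$ case (the alternative $I\subseteq Z$ being excluded since we assumed $I\supsetneq Z$). Therefore $L$ is simple when $4\nmid n$ and $L/Z$ is simple when $4\mid n$.

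I expect the main obstacle to be the ``reduction step'': showing that an arbitrary nonzero $X\in I$ (modulo $Z$ in the even case) can be bracketed down to one of the elements $b_{ij}$ or $c_{ij}$. This requires a careful case analysis on which block-entries of $X$ are nonzero, handling separately the situation where $X$ is purely diagonal-block (i.e. $b=c=0$, $X=\left(\begin{smallmatrix} a & 0\\ 0 & a^{T}\end{smallmatrix}\right)$ with $\mathrm{tr}(a)=0$): here one must produce an off-diagonal element by a bracket such as $[X,b_i]$ or $[X,c_i]$, using that $a\neq \lambda I_m$ for the relevant indices (which is where the hypothesis $m\geq 3$, i.e. $n>4$, and in the even case the passage to $L/Z$ are genuinely used — for $m=2$ the analogous $L$ is solvable, as the Remark shows). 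The hypothesis $n>4$ guarantees enough indices $i,j,k$ are available for the chains of brackets in Lemma~\ref{lmS} to close up, and $\mathrm{char}\,K=2$ is used throughout to kill trace and diagonal terms.
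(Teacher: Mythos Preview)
Your argument for part~(1) is correct and matches the paper's: a direct block computation showing that $X^{2}$ again has alternating off-diagonal blocks and traceless upper-left block.

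For parts~(2) and~(3), however, there is a genuine gap. You write that $[\mathfrak{sp}_{2m}(K),I]\subseteq I$ ``since $I\triangleleft L$ and $[\mathfrak{sp}_{2m}(K),L]\subseteq L$'', and then bracket elements of $I$ against $b_{i}$, $c_{i}$, and individual $d_{i}$. But those two facts only give $[\mathfrak{sp}_{2m}(K),I]\subseteq L$, not $\subseteq I$: an ideal of an ideal need not be an ideal of the ambient algebra. Since $b_{i},c_{i}\notin\mathfrak{sp}_{2m}(K)^{(1)}$ (their corresponding $b$- or $c$-block has a nonzero diagonal entry, hence is not alternating) and $d_{i}\notin\mathfrak{sp}_{2m}(K)^{(2)}$ (its $a$-block has trace~$1$), these elements are \emph{not} in $L$, and bracketing $I$ against them is not a legal move. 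Your entire reduction scheme---``first to a nonzero element supported in the off-diagonal blocks'' via $[X,b_{i}]$, $[X,c_{i}]$, etc.---collapses at this point.

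The paper avoids this by working only inside $L$. It uses Lemma~\ref{lmS} in the contrapositive direction: if $I\neq L$ then no $b_{ij}$ or $c_{ij}$ lies in $I$. It then argues (implicitly via the weight decomposition under the torus spanned by the $d_{i}+d_{j}$, all of which \emph{do} lie in $L$) that any element $\alpha\in I$ must be purely diagonal-block, $\alpha=\left(\begin{smallmatrix}a&0\\0&a^{T}\end{smallmatrix}\right)$. Finally it brackets such $\alpha$ with $X=b_{ij}\in L$; the resulting off-diagonal element must again lie in $I$, hence must vanish, and the equations $ab+ba^{T}=0$ for $b=e_{ij}+e_{ji}$ force $a$ to be scalar. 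Your strategy can be repaired along the same lines, but every bracket you take must be with an element of $L$ itself: $a_{ij}$, $b_{ij}$, $c_{ij}$, or $d_{i}+d_{j}$.
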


\begin{proof}
In order to prove (1), we need only to prove that $\mathfrak{sp}_{2m}(K)^{(2)}$ is closed under the $2$-map. Let $n=2m$ and $\alpha=\left(\begin{array}{cc} 
a & b \\
c & a^{T} \end{array}\right)\in \mathfrak{sp}_{2m}(K)^{(2)}$. Then $b, \ c\in \text{Alt}_m(K)$ and $\text{tr}(a)=0$. Since $b$ and $c$ are symmetric matrices, we obtain that \[\alpha^{2}=\left(\begin{array}{cc} 
a^{2}+bc & ab+ba^{T} \\
ca+a^{T}c & cb+ (a^{T})^{2} \end{array}\right)=\left(\begin{array}{cc} 
a^{2}+bc & ab+ba^{T} \\
ca+a^{T}c &(bc+ a^{2})^{T} \end{array}\right),\]
  $(ab+ba^T)^T=ab+ba^T$, and  $(ca+a^{T}c)^T=ca+a^{T}c$. Moreover, by using the equalities $b_{ii}=0, \ c_{ii}=0 $ and $\text{tr}(a)=0$, we have \[(ca+a^{T}c)_{ii}=0,\quad  (ab+ba^T)_{ii}=0,\quad \text{and}\quad  \text{tr}(a^{2}+bc)=0.\]
  Therefore, $\alpha^{2}\in\mathfrak{sp}(V,b)^{(2)}$ for all  $\alpha\in \mathfrak{sp}(V,b)^{(2)} $. Hence $\mathfrak{sp}_{2m}(K)^{(2)}$ is a Lie $2$-algebra.
  
  \medskip

Let us prove (2). If $4\mid {2m}$, then $\mathfrak{z}(\mathfrak{gl}_{2m}(K))\subseteq\mathfrak{sp}_{2m}(K)^{(2)}$ is an ideal of $\mathfrak{sp}_{2m}(K)^{(2)}$. Let  $J$ be an ideal of  $\mathfrak{sp}_{2m}(K)^{(2)}/\mathfrak{z}(\mathfrak{gl}_{2m}(K))$, then $J=I/\mathfrak{z}(\mathfrak{gl}_{2m}(K)$, where $I$ is an ideal of $\mathfrak{sp}_{2m}(K)^{(2)}$ and $\mathfrak{z}(\mathfrak{gl}_{2m}(K))\subseteq I.$  Suppose that  $I\neq\mathfrak{sp}_{2m}(K)^{(2)}.$ By Lemma \ref{lmS}, we have $c_{ij}$, $b_{ij}$ $\notin I$, therefore given $\alpha \in I $, there exists $a=(a_{ij})\in\mathfrak{g}_{m}(K)$ with $\text{tr}(a)=0$ such that 
 \[\alpha=\left(\begin{array}{cc} 
a & 0 \\
0 & a^{T} \end{array}\right).\]
Now, since $I$ is an ideal of $\mathfrak{sp}_{2m}(K)^{(2)}$, we get that \[[\alpha,X]\in I,\quad \forall X \in \mathfrak{sp}_{2m}(K)^{(2)}.\]
In particular, for \[X=\left(\begin{array}{cc} 
0 & b \\
0 & 0 \end{array}\right)\]
with $b:=e_{ij}+e_{ji}\in \text{Alt}_{m}(K)$, we have  $a_{ij}=0$ for  $i\neq j$ and $a_{ii}=a_{11}$ for all $i$. Then $\alpha=a_{ii}I_{2m}\in\mathfrak{z}(\mathfrak{gl}_{2m}(K)).$ Hence,  $I=\mathfrak{z}(\mathfrak{gl}_{2m}(K))$ and $\mathfrak{sp}_{2m}(K)^{2}/ \mathfrak{z}(\mathfrak{gl}_{2m}(K))$ is simple.
\medskip

Finally we prove (3). Let $I$ be an ideal of $\mathfrak{sp}_{2m}(K)^{(2)}$ , $I\neq\mathfrak{sp}_{2m}(K)^{(2)}$. Reasoning as the proof of item $(2)$, we get that $a=\lambda I_{m}$ with $m$ odd. As $\text{tr}(a)=0$, we have $\lambda=0$. Then $\alpha=0$  and $I=\{0\}$. Therefore, $\mathfrak{sp}_{2m}(K)^{(2)}$ is simple.  
\end{proof}

\section{Lie $2$-algebras $(\mathfrak{g}(V,b),[2])$  with $b$ orthogonal bilinear form}

In this section we show that the Lie algebra which preserve the orthogonal linear form over $K$is not a Lie $2$-subalgebra of $\mathfrak{gl}_{n}(K)$. \\

Suppose that $ b: V\times V\rightarrow K$ is an orthogonal bilinear form, and let $\mathfrak{o}(V, b)$ be the  Lie $2$-algebra associated to $b$. In \cite{k} (Theorem 20), it is shown that there exists a basis $β$ of $V$ in which  $b$ has  Gram matrix  $ D = \text{diag} (d_1 , d_2, ..., d_n) $, where $ 0\neq d_i \in K $  for all $i$,  then \[\mathfrak{g}(D):=\{A\in\mathfrak{gl}_{n}(K):d_ia_{ij}=d_ja_{ji},\quad 1\leq i,j\leq n \}.\]
Since $K$ is an algebraically closed field, we have that $K^{2}=K$, this is, every element of $K$ is a square. Then, we can assume that $D=I_n$, then \[\mathfrak{o}_{n}(K):=\mathfrak{g} (I_{n})=\{A \in \mathfrak{gl}_{n}(K):\, A \quad \text{is}\quad \text{symmetric} \}\] is a Lie $2$-algebra with basis $\{e_ {ii} \}\cup \{\bar{e}_{ij}: = e_{ij} + e_{ji} \} $, $ 1 \leq i <j \leq n $ and
 whose Lie bracket is given by:
\begin{align*}
[\bar{e}_{ii},\bar{e}_{ij}]&=\bar{e}_{ij},\,\,\,\, 1\leq i<j\leq n.\\ [\bar{e}_{ii},\bar{e}_{kk}]&=0,\,\,\, i\neq k.\\
[\bar{e}_{ij},\bar{e}_{kl}] &=\delta_{ik}\bar{e}_{jl}+\delta_{il}\bar{e}_{kj}+\delta_{jk}\bar{e}_{il}+\delta_{jl}\bar{e}_{ik}\,\,\,\, \forall i<j, k<l.
\end{align*}
  Moreover, $\bar{e}_{ij}^{2}=e_{ii}+e_{jj}$,  and $\text{dim}_{K}(\mathfrak{o}_n(K))=\frac{n(n+1)}{2}.$

\medskip

\begin{lemma}
$\mathfrak{o}_{n}(K)^{(1)}=\emph{Alt}_{n}(K)$ and  $\emph{dim}_{K}(\mathfrak{o}_{n}(K)^{(1)})=\frac{n(n-1)}{2}.$
\label{lmA}
\end{lemma}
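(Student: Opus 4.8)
The plan is to establish the two claimed facts about $\mathfrak{o}_n(K)^{(1)}$ directly from the explicit basis and bracket relations recorded just above. First I would show the inclusion $\mathfrak{o}_n(K)^{(1)}\subseteq \mathrm{Alt}_n(K)$. Recall that $\mathrm{Alt}_n(K)$ consists of symmetric matrices with zero diagonal (in characteristic $2$, symmetric and skew-symmetric coincide, so $\mathrm{Alt}_n(K)$ is the appropriate "alternating" refinement); it is spanned by the $\bar e_{ij}=e_{ij}+e_{ji}$ with $i<j$. So it suffices to check that every bracket of basis elements of $\mathfrak{o}_n(K)$ lies in $\mathrm{span}\{\bar e_{ij}: i<j\}$. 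Reading off the three families of bracket relations: $[\bar e_{ii},\bar e_{ij}]=\bar e_{ij}$, $[\bar e_{ii},\bar e_{kk}]=0$, and $[\bar e_{ij},\bar e_{kl}]=\delta_{ik}\bar e_{jl}+\delta_{il}\bar e_{kj}+\delta_{jk}\bar e_{il}+\delta_{jl}\bar e_{ik}$, we see that in all cases the output is a linear combination of off-diagonal $\bar e$'s (never a diagonal $e_{ii}$), hence lies in $\mathrm{Alt}_n(K)$.

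For the reverse inclusion $\mathrm{Alt}_n(K)\subseteq \mathfrak{o}_n(K)^{(1)}$, it is enough to produce each spanning element $\bar e_{ij}$ ($i<j$) as a bracket. The relation $[\bar e_{ii},\bar e_{ij}]=\bar e_{ij}$ does exactly this, since both $\bar e_{ii}=e_{ii}$ and $\bar e_{ij}$ lie in $\mathfrak{o}_n(K)$. This gives $\bar e_{ij}\in\mathfrak{o}_n(K)^{(1)}$ for all $i<j$, hence $\mathrm{Alt}_n(K)\subseteq\mathfrak{o}_n(K)^{(1)}$, and combined with the first inclusion, $\mathfrak{o}_n(K)^{(1)}=\mathrm{Alt}_n(K)$.

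Finally, the dimension count: $\mathrm{Alt}_n(K)$ has basis $\{\bar e_{ij}: 1\le i<j\le n\}$, which has $\binom{n}{2}=\frac{n(n-1)}{2}$ elements, giving $\dim_K(\mathfrak{o}_n(K)^{(1)})=\frac{n(n-1)}{2}$.

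There is essentially no obstacle here; the only thing to be careful about is the characteristic-$2$ bookkeeping — namely that one must use the genuinely alternating matrices (zero diagonal), not merely the symmetric ones, and confirm that no bracket ever produces a nonzero diagonal contribution (in particular $[\bar e_{ij},\bar e_{ij}]$, i.e. the $2$-map value $\bar e_{ij}^{[2]}=e_{ii}+e_{jj}$, is a squaring operation and not a commutator, so it does not enter the derived algebra). This lemma will then feed into the subsequent argument that $\mathfrak{o}_n(K)^{(1)}$ is not closed under the $2$-map, since $\bar e_{ij}^{[2]}=e_{ii}+e_{jj}\notin\mathrm{Alt}_n(K)$.
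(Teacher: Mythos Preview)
Your proof is correct and follows essentially the same direct-verification approach as the paper. The only minor difference is in the reverse inclusion: the paper exhibits each $\bar e_{ij}$ as $[\bar e_{ik},\bar e_{kj}]$ using a third index $k$, whereas you use the simpler relation $[\bar e_{ii},\bar e_{ij}]=\bar e_{ij}$, which has the small advantage of not requiring $n\ge 3$.
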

\begin{proof}
Let $a,b\in\mathfrak{o}_{n}(K)$, then $$[a,b]^{T}=(ab-ba)^{T}=b^{T}a^{T}-a^{T}b^{T}=-(ab-ba)=[a,b].$$
Thus,  $[a,b]$ is a symmetric matrix. Moreover, by the symmetry of $a$ and $b$, we have $[a,b]_{ii}=\sum_j(a_{ij}b_{ji}-b_{ij}a_{ji})=0$. Therefore,  $\mathfrak{o}_{n}(K)^{(1)}\subseteq \text{Alt}_n(K).$ Reciprocally the matrices $\bar{e}_{ij}:=e_{ij}+e_{ji}$, where $1\leq i<j\leq n$, form a basis of  $\text{Alt}_n(K)$. Now, since $\bar{e}_{ij}=[\bar{e}_{ik},\bar{e}_{kj}]\in \mathfrak{o}_{n}(K)^{(1)}$ for all $i, j$, we have  $\mathfrak{o}_{n}(K)^{(1)}=\text{Alt}_{n}(K)$ and $\text{dim}_{K}(\mathfrak{o}_{n}(K)^{(1)})=\text{dim}_K(\mathfrak{o}_{n}(K))-n=\frac{n(n-1)}{2}.$
\end{proof}
\begin{remark}
From Lemma \ref{lmA}, it follows that the following elements 
$$\bar{e}_{ij}=e_{ij}+e_{ji} $$
for $i\neq j$ form a basis of $\mathfrak{o}_{n}(K)^{(1)}$. Now, since $\bar{e}_{ij}^2=e_{ii}+e_{jj}$ does not belong to $\mathfrak{o}_{n}(K)^{(1)}$, we have $\mathfrak{o}_{n}(K)^{(1)}$ is not a Lie $2$-algebra with respect to the $2$-map $\left[ 2 \right]: \mathfrak{o}_{n}(K)\to \mathfrak{o}_{n}(K)$  defined by $a\mapsto a^2$.
\end{remark}

\section{Classical type simple Lie $2$-algebra and their toral rank.}
 W. Killing and E. Cartan show that all simple Lie algebra over an algebraically closed field of zero characteristic is isomorphic to one of the Classical algebras of Lie $A_{n}$ $(n\geq 1)$, $B_{n}$ $(n\geq2)$, $C_{n} (n\geq3)$, $D_{n}(n\geq 4)$ or to the Exceptional Lie algebras, $\mathfrak{g}_{2},\mathfrak{f}_{4},\mathfrak{e}_{6},\mathfrak{e}_{7},\mathfrak{e}_{8}$ (see \cite{ja}), but in characteristic 2, it seems that many new phenomena arise, for instante, these are not necessarily simple,  or some of them are isomorphic and, and therefore,   the classification of simple Lie algebras over the field $K$ will be different from those of the characteristics 0 and $p\geq 5$. In this section, we calculate the toral rank of the simple $2$-Lie algebra of the classical type and we conclude that there are no classical type simple Lie $2$-algebra  of odd toral rank. In particular, there are no classical type simple Lie $2$-algebra  of  toral rank 3.

\begin{definition}
Given an irreducible root system of type $X_{l}$ and its corresponding Chevalley algebra $\mathfrak{g}(X_{l},{K})$ over the field $K$, the quotient \[\overline{\mathfrak{g}(X_{l},{K})}:=\mathfrak{g}(X_{l},{K})/\mathfrak{z}(\mathfrak{g}(X_{l},K)),\] where $\mathfrak{z}(\mathfrak{g}(X_{l},K))$ is the center of $\mathfrak{g}(X_{l},K)$, is usually called the \textit{classical Lie algebra of type $X_{l}$}.
\end{definition}

\begin{remark}
This definition is exactly the same as Steinberg's, but Steinberg excluded some types of characteristic $2$ and $3$.
\end{remark}

The simplicity of the classical type Lie algebras in
characteristic $2$ have been determined  by Hogeweij in $[6]$, as indicated in the following theorem.

\begin{proposition}
\label{isa 3}
Suppose that $X_{l}$ is a Lie algebra which is not of type $A_{1}$, $B_{l}$, $C_{l}$, or $\mathfrak{f}_{4}$. Then $\overline{\mathfrak{g}(X_{l},{K})}$ is a simple Lie $2$-algebra. 
\end{proposition}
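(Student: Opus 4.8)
The plan is to exhibit $\overline{\mathfrak{g}(X_l,K)}$ as a simple Lie $2$-algebra by reducing, type by type, to the results already proved in the excerpt, together with the explicit structure-constant description of Chevalley algebras. First I would recall that $\mathfrak{g}(X_l,K) = \mathfrak{h}_\mathbb{Z}\otimes K \oplus \bigoplus_{\alpha\in\Phi} K x_\alpha$ has the Chevalley basis, that the center $\mathfrak{z}(\mathfrak{g}(X_l,K))$ sits inside $\mathfrak{h}$ and is spanned by those $h\in\mathfrak{h}_\mathbb{Z}\otimes K$ with $\alpha(h)=0$ for all $\alpha\in\Phi$, and that the $2$-map on the Chevalley algebra is the restriction of $A\mapsto A^2$ in any faithful representation (equivalently, $x_\alpha^{[2]}=0$ for all roots $\alpha$ since $2\alpha$ is never a root, and $h^{[2]}$ is computed inside the torus). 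The quotient by the center then inherits a $2$-map exactly as in the proofs of Theorems \ref{isa 1}(3) and \ref{isa 2}(2).

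Next I would organize the simplicity argument. Since $X_l$ is, by hypothesis, not of type $A_1$, $B_l$, $C_l$, or $\mathfrak{f}_4$, the remaining cases are $A_l$ ($l\ge 2$), $D_l$ ($l\ge 4$), $\mathfrak{g}_2$, $\mathfrak{e}_6$, $\mathfrak{e}_7$, $\mathfrak{e}_8$. For type $A_l$ this is literally Theorem \ref{isa 1}: $\overline{\mathfrak{g}(A_l,K)}$ is $\mathfrak{sl}_{l+1}(K)$ when $l+1$ is odd and $\mathfrak{psl}_{l+1}(K)$ when $l+1$ is even, both shown simple there. For the remaining types I would invoke Hogeweij's determination of the ideals of the classical Lie algebras in characteristic $2$ (the reference $[6]$ cited just before the statement): for these types the center coincides with the unique minimal (and the unique nontrivial) ideal when it is nonzero, and the quotient has no proper nonzero ideals. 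Concretely, one checks that any nonzero ideal $I$ of $\overline{\mathfrak{g}(X_l,K)}$ pulls back to an ideal $\tilde I\supsetneq\mathfrak{z}$ of $\mathfrak{g}(X_l,K)$; using the $\mathfrak{h}$-weight decomposition, $\tilde I$ is a sum of weight spaces, so it contains some root vector $x_\alpha$; then the connectedness of the Dynkin diagram and the relations $[x_\alpha,x_\beta]=\pm x_{\alpha+\beta}$ (whenever $\alpha+\beta\in\Phi$), valid since all structure constants $N_{\alpha\beta}$ are $\pm1$ modulo $2$ for simply-laced types and controllable for $\mathfrak{g}_2$, propagate $x_\gamma\in\tilde I$ for every $\gamma\in\Phi$, and then $[x_\gamma,x_{-\gamma}]=h_\gamma$ forces $\tilde I\supseteq\mathfrak{h}_\mathbb{Z}\otimes K$, whence $\tilde I=\mathfrak{g}(X_l,K)$ and $I=\overline{\mathfrak{g}(X_l,K)}$.

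The only subtlety, and the step I expect to be the main obstacle, is the propagation of $x_\alpha\in\tilde I$ to all root vectors in characteristic $2$: one must be sure that the relevant structure constants $N_{\alpha,\beta}$ do not vanish mod $2$ along some chosen path through $\Phi$. This is where the exclusion of $B_l$, $C_l$, $\mathfrak{f}_4$ (and $A_1$) is essential — those are exactly the types where, in characteristic $2$, a long-root part can split off and the naive propagation fails, producing extra ideals. For the allowed types I would cite Hogeweij's table directly rather than redo the case analysis, noting that for simply-laced types every $N_{\alpha,\beta}\in\{\pm1\}$ so no vanishing occurs, and for $\mathfrak{g}_2$ one checks the finitely many bad pairs by hand (or again cites $[6]$). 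I would close by remarking that simplicity of the underlying Lie algebra, together with part (1)-style verification that the quotient $2$-map is well defined, gives that $\overline{\mathfrak{g}(X_l,K)}$ is a simple Lie $2$-algebra in the sense of the definition in Section 1.
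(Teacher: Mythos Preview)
The paper does not prove this proposition at all. It is presented as a quotation of Hogeweij's result: the sentence immediately preceding the statement reads ``The simplicity of the classical type Lie algebras in characteristic $2$ have been determined by Hogeweij in $[6]$, as indicated in the following theorem,'' and no argument follows. So there is no paper-proof to compare your proposal against.

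Your sketch is therefore strictly more than what the paper offers. It is also essentially sound: the pullback-to-$\tilde I$ step, the observation that an $\mathfrak{h}$-stable ideal not contained in $\mathfrak{z}$ must contain some $x_\alpha$, and the propagation via $N_{\alpha,\beta}\in\{\pm1\}$ for simply-laced types are all correct, and you correctly flag $\mathfrak{g}_2$ as the case requiring a hand check (some $N_{\alpha,\beta}=\pm2$ do vanish mod $2$ there, so the path through $\Phi$ has to be chosen with care) or a direct citation of Hogeweij. Two minor remarks: your aside that $x_\alpha^{[2]}=0$ because $2\alpha\notin\Phi$ is fine for the simply-laced cases but not literally for $\mathfrak{g}_2$, where root strings of length $>2$ give $\mathrm{ad}(x_\alpha)^2\neq0$; this does not affect the simplicity argument, only your parenthetical description of the $2$-map. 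And your reduction of type $A_l$ to Theorem~\ref{isa 1} is a nice internal shortcut, though the paper intends Hogeweij to cover that case too.
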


So, from Proposition \ref{isa 3}, Theorem \ref{isa 1}  and  Theorem \ref{isa 2} it follows that the classical type simple Lie $2$-algebra  are:
\begin{corollary} 
\label{COR 2,4}
The  classic type simple Lie $2$-algebra are:
\begin{enumerate}
    \item Type $A_{n}$:
    \begin{align*} 
 \overline{\mathfrak{g}(A_{n},K)}&\simeq \mathfrak{sl}_{n+1}(K)\quad \text{if}\quad 2\nmid(n+1),\\ 
\overline{\mathfrak{g}(A_{n},K)}&\simeq \mathfrak{psl}_{2n}(K)\quad \text{if}\quad 2\mid(n+1).
\end{align*}
    
\item Type $D_{n}$:
\begin{align*} 
\overline{\mathfrak{g}(D_n,K)}&\simeq \mathfrak{sp}_{2n}(K)^{(2)}\quad \text{if}\,\, n\,\, \text{is}\,\, \text{odd}, \\
\overline{\mathfrak{g}(D_n,K)}&\simeq \mathfrak{sp}_{2n}(K)^{(2)}/\mathfrak{z}(\mathfrak{gl}_{n}(K))\quad \text{if}\,\, n\,\, \text{is}\,\, \text{even}.
\end{align*}
\item Type $\mathfrak{g}_{2}$:
\begin{align*} 
\overline{\mathfrak{g}(\mathfrak{g}_{2},K)}&=\mathfrak{g}(\mathfrak{g}_{2},K).  
\end{align*}

\item Type $\mathfrak{e}_{6}$:
\begin{align*} 
\overline{\mathfrak{g}(\mathfrak{e}_{6},K)}=\mathfrak{g}(\mathfrak{e}_{6},K).
\end{align*}


\item Type $\mathfrak{e}_{7}.$
\begin{align*} 
\overline{\mathfrak{g}(\mathfrak{e}_{7},K)}
\end{align*}

\item Type $\mathfrak{e}_{8}.$
\begin{align*}
\overline{\mathfrak{g}(\mathfrak{e}_{8},K)}&=\mathfrak{g}(\mathfrak{e}_{8},K).
\end{align*}
 \end{enumerate}
 \end{corollary}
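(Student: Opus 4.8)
The plan is to combine three ingredients that are already in place: Hogeweij's determination of simplicity (Proposition~\ref{isa 3}), the structure Theorems~\ref{isa 1} and~\ref{isa 2}, and, type by type, an identification of the Chevalley algebra $\mathfrak g(X_l,K)$ together with its center. I would begin with the observation that every Chevalley algebra is the Lie algebra of a semisimple algebraic $K$-group and hence is restricted, so its center is a restricted ideal and each quotient $\overline{\mathfrak g(X_l,K)}$ carries a $2$-map; the whole content of the statement is therefore (a) deciding which $\overline{\mathfrak g(X_l,K)}$ are simple and (b) producing an explicit model for each. For (a), Proposition~\ref{isa 3} gives simplicity exactly for $X_l\notin\{A_1,B_l,C_l,\mathfrak f_4\}$, i.e. for $X_l\in\{A_n\,(n\ge 2),\,D_n\,(n\ge 4),\,\mathfrak g_2,\,\mathfrak e_6,\,\mathfrak e_7,\,\mathfrak e_8\}$, while the four excluded families fail to be simple by Hogeweij's classification (for $A_1$ and $C_l$ this is already visible from the derived series computed in Sections~2 and~3, for $B_l$ from Section~4, and for $\mathfrak f_4$ it is the exceptional characteristic-$2$ phenomenon recorded by Hogeweij).

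For (b) in type $A_n$ I would use $\mathfrak g(A_n,K)=\mathfrak{sl}_{n+1}(K)$ and compute its center $\mathfrak{sl}_{n+1}(K)\cap\mathfrak z(\mathfrak{gl}_{n+1}(K))$, which is $0$ when $2\nmid(n+1)$ and $KI_{n+1}$ when $2\mid(n+1)$; passing to the quotient gives $\overline{\mathfrak g(A_n,K)}\cong\mathfrak{sl}_{n+1}(K)$, respectively $\cong\mathfrak{psl}_{n+1}(K)$, and simplicity together with the $2$-map $X\mapsto X^2$ is Theorem~\ref{isa 1}(2),(3). For the exceptional types I would read the center of $\mathfrak g(X_l,K)$ off the fundamental group $P/Q$ (weight lattice modulo root lattice), whose reduction modulo $2$ governs the center in characteristic $2$: $P/Q$ is trivial for $\mathfrak g_2$ and $\mathfrak e_8$ and is $\mathbb Z/3$ for $\mathfrak e_6$, all of odd order, so $\mathfrak z(\mathfrak g(X_l,K))=0$ and $\overline{\mathfrak g(X_l,K)}=\mathfrak g(X_l,K)$; for $\mathfrak e_7$ one has $P/Q=\mathbb Z/2$, so the center is nonzero and $\overline{\mathfrak g(\mathfrak e_7,K)}$ is a genuine proper quotient of the Chevalley algebra for which no standard matrix model is available --- which is why the $\mathfrak e_7$ entry is recorded only by its symbol.

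The type $D_n$ case is where I expect the real work, and it is the main obstacle. Over $K$ the polar form of a nondegenerate quadratic form on a $2n$-dimensional space is alternating, hence symplectic, so $\mathfrak g(D_n,K)$ is realized inside $\mathfrak{sp}_{2n}(K)$; pinning it down as a precise term of the derived series requires matching dimensions and root weights against Table~1 and the derived-series Lemma of Section~3, together with the center computation $\dim\mathfrak z(\mathfrak g(D_n,K))=1$ for $n$ odd and $=2$ for $n$ even (corresponding to $P/Q=\mathbb Z/4$, respectively $(\mathbb Z/2)^2$). Passing to the quotient and invoking Theorem~\ref{isa 2}(2),(3), which applies since $n\ge 4>2$, then yields $\overline{\mathfrak g(D_n,K)}\cong\mathfrak{sp}_{2n}(K)^{(2)}$ for $n$ odd and $\overline{\mathfrak g(D_n,K)}\cong\mathfrak{sp}_{2n}(K)^{(2)}/\mathfrak z(\mathfrak{gl}_{2n}(K))$ for $n$ even. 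The subtle point throughout is to verify that the $2$-map transported from the Chevalley model agrees with $X\mapsto X^2$ on the symplectic model; once both sides are known to be centerless simple Lie $2$-algebras this is forced, since a centerless Lie $2$-algebra admits at most one $2$-map.
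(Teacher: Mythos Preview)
Your proposal is correct and follows the same skeleton the paper uses: the corollary is stated there as an immediate consequence of Proposition~\ref{isa 3} together with Theorems~\ref{isa 1} and~\ref{isa 2}, with no further argument given. What you have written is a genuine proof where the paper offers only a citation, and the extra ingredients you supply --- the $P/Q$ computation to read off the center of the Chevalley algebra in the exceptional types, the polar-form argument embedding $\mathfrak g(D_n,K)$ into $\mathfrak{sp}_{2n}(K)$, the dimension/center matching for $D_n$, and the uniqueness of the $2$-map on a centerless algebra to reconcile the two restricted structures --- are all correct and exactly what is needed to turn the paper's one-line deduction into an argument.

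Two minor remarks. First, your notation $\mathfrak{psl}_{n+1}(K)$ and $\mathfrak z(\mathfrak{gl}_{2n}(K))$ is in fact cleaner than what appears in the statement (where $\mathfrak{psl}_{2n}$ and $\mathfrak z(\mathfrak{gl}_n)$ look like indexing slips). Second, you are right that the $D_n$ identification is the substantive step: the paper establishes simplicity of $\mathfrak{sp}_{2n}(K)^{(2)}$ and of its quotient in Theorem~\ref{isa 2}, but nowhere spells out the isomorphism with $\overline{\mathfrak g(D_n,K)}$; your sketch via the alternating polar form and a dimension count is the standard way to close that gap.
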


\begin{theorem}
\label{PROPO23}
Let $\mathfrak{g}$ be a classical type simple Lie $2$-algebra and $\mathfrak{h}$ be a Cartan subalgebra  of $\mathfrak{g}$. Then $$MT(\mathfrak{g})=\emph{dim}_{K}(\mathfrak{h}).$$
\end{theorem}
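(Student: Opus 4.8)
The plan is to prove both inequalities $MT(\mathfrak{g}) \geq \dim_K(\mathfrak{h})$ and $MT(\mathfrak{g}) \leq \dim_K(\mathfrak{h})$. For the first, I would produce a torus of dimension $\dim_K(\mathfrak{h})$ sitting inside $\mathfrak{h}$. Since $\mathfrak{g}$ is one of the algebras listed in Corollary \ref{COR 2,4}, I would go type-by-type and exhibit an explicit basis of $\mathfrak{h}$ consisting of toral elements. For the $A_n$-case the diagonal matrices $e_{ii}+e_{i+1,i+1}$ (and their images in $\mathfrak{psl}$) satisfy $t^{[2]} = t^2 = t$ because diagonal idempotent-type matrices over a field of characteristic $2$ square to themselves; similarly for $\mathfrak{sp}_{2m}(K)^{(2)}$ the elements $d_i$ (and the $d_i+d_j$ spanning the Cartan of the derived algebra) satisfy $d_i^{[2]}=d_i$ by Table \ref{TB1}; for $\mathfrak{g}_2$, $\mathfrak{e}_6$, $\mathfrak{e}_7$, $\mathfrak{e}_8$ the coroots $h_\alpha$ form a toral basis of $\mathfrak{h}$ in the Chevalley basis since $h_\alpha^{[2]}=h_\alpha$ in characteristic $2$. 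In each case the span of these elements is a torus, hence $MT(\mathfrak{g}) \geq \dim_K \mathfrak{h}$, and in fact $T(\mathfrak{h})$ (the torus generated by toral elements of $\mathfrak{h}$) equals $\mathfrak{h}$.

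For the reverse inequality, I would invoke the general structure theory already cited in the Preliminaries: by the remark following the definition of maximal torus, $T(\mathfrak{h})$ is a maximal torus of $\mathfrak{g}$ (citing \cite{SP}, Lemma 4), and $\mathfrak{h} = C_{\mathfrak{g}}(\mathfrak{h})$ is self-centralizing. The key point is that any torus $\mathfrak{t}$ of $\mathfrak{g}$ is abelian and consists of semisimple (ad-diagonalizable) elements, so it is contained in a Cartan subalgebra, and all Cartan subalgebras of a classical simple Lie algebra have the same dimension (equal to the rank $\ell$). More directly: a torus $\mathfrak{t}$ with $\dim \mathfrak{t} > \dim \mathfrak{h}$ would give, after conjugation, a toral subalgebra strictly containing $T(\mathfrak{h}) = \mathfrak{h}$, contradicting maximality of $T(\mathfrak{h})$. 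I would spell this out by noting that $\mathfrak{t} \subseteq C_{\mathfrak{g}}(\mathfrak{t})$ and, since the elements of $\mathfrak{t}$ are toral hence ad-semisimple with eigenvalues in the prime field, a simultaneous eigenspace decomposition shows $\dim_K \mathfrak{t} \leq \dim_K \mathfrak{h}'$ for the Cartan subalgebra $\mathfrak{h}'$ containing it, and all such have dimension $\dim_K \mathfrak{h}$.

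The main obstacle I anticipate is the reverse inequality: unlike in characteristic zero, one cannot simply quote conjugacy of Cartan subalgebras, and one must be careful that ``torus'' in the restricted-Lie-algebra sense (a $[2]$-invertible abelian subalgebra) behaves well. The cleanest route is to use that a torus $\mathfrak{t}$ is a set of commuting toral elements, hence each acts semisimply via $\mathrm{ad}$ with eigenvalues $0$ or $1$ (since $\mathrm{ad}(t)^2 = \mathrm{ad}(t^{[2]}) = \mathrm{ad}(t)$ forces $\mathrm{ad}(t)$ to satisfy $x^2 = x$), so $\mathfrak{t}$ acts by a simultaneous decomposition $\mathfrak{g} = \bigoplus_{\chi} \mathfrak{g}_\chi$ indexed by $\mathbb{K}_2$-linear functionals $\chi \colon \mathfrak{t} \to \mathbb{K}_2$, with $\mathfrak{t} \subseteq \mathfrak{g}_0 = C_{\mathfrak{g}}(\mathfrak{t})$. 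Then $C_{\mathfrak{g}}(\mathfrak{t})$ contains a Cartan subalgebra, and I would finish by checking (type-by-type, or by a uniform root-theoretic count) that the number of independent such functionals is at most $\ell = \dim_K \mathfrak{h}$ because the weights of $\mathfrak{g}$ under a maximal torus span at most an $\ell$-dimensional space over $\mathbb{K}_2$. Combining the two inequalities gives $MT(\mathfrak{g}) = \dim_K(\mathfrak{h})$.
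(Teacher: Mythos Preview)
Your lower bound and your second-paragraph upper bound are essentially the paper's proof. The paper shows, uniformly via the Chevalley basis, that each $h_i$ satisfies $(h_i\otimes 1)^{[2]}=h_i\otimes 1$, hence $T(\overline{\mathfrak h(X_l,K)})=\overline{\mathfrak h(X_l,K)}$; it then quotes conjugacy of Cartan subalgebras for classical-type algebras over an algebraically closed field (Seligman \cite{p}) together with Demu\v{s}kin's Lemma~5 \cite{SP} to transport this to an arbitrary Cartan $\mathfrak h$, obtaining $T(\mathfrak h)=\mathfrak h$ and therefore $MT(\mathfrak g)=\dim_K\mathfrak h$. Your type-by-type verification of torality is a longhand version of the first step, and your ``$\mathfrak t\subseteq\mathfrak h'$, all Cartans have the same dimension'' is exactly the conjugacy step.

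The obstacle you anticipate is not one: for the classical types under consideration, conjugacy of Cartan subalgebras in characteristic~$2$ \emph{is} available and is precisely what the paper invokes. Your attempted workaround in the last paragraph does not close the argument. The assertion that ``the weights of $\mathfrak g$ under a maximal torus span at most an $\ell$-dimensional space over $\mathbb K_2$'' is, after dualizing and using that simplicity forces the weights to separate points of $\mathfrak t$, equivalent to $\dim_K\mathfrak t\le\ell$ --- the very inequality you are trying to prove. A genuine conjugacy-free bound would require an independent input (for instance the $\mathbb K_2$-rank of the root lattice with respect to \emph{that} torus), which you have not supplied and which ultimately presupposes comparing $\mathfrak t$ to a fixed Cartan. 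Drop the last paragraph and simply cite Seligman, as the paper does.
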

\begin{proof} Let $\mathfrak{g}$ be a classical type simple Lie $2$-algebra. Then from Corollary \ref{COR 2,4} it follows that   $\mathfrak{g}=\overline{\mathfrak{g}(X_{l},K)}$ with $X_{l}\neq A_{1},B_{l},C_{l},\mathfrak{f}_{4}$. Hence, any quotient of the form
\[\overline{\mathfrak{h}(X_{l},K)}=\mathfrak{h}(X_{l},K)/\mathfrak{z}(\mathfrak{g}(X_{l},K)),\] where $\mathfrak{h}(X_{l},K)$ is a Cartan subalgebra of the Chevalley $K$-algebra  $\mathfrak{g}(X_{l},K)$ is a Cartan subalgebra of $\mathfrak{g}$.
Since $\mathfrak{h}(X_{l},K)=\text{span}\{h_{i}\otimes 1:h_{i}\in\mathfrak{h}_{X_{l}}\}$ and  $\mathfrak{h}_{X_l}$ is the subalgebra of diagonal matrices of $\mathfrak{sl}_{l+1}(K)$, we obtain 
$(h_{i}\otimes 1)^{[2]}=h_{i}\otimes 1$, for each 
$h_{i}\in \mathfrak{h}_{X_{l}}$. Thus, the equality $([h_{i}\otimes 1])^{[2]}=[h_{i}\otimes 1]$ module $\mathfrak{z}(\mathfrak{g}(X_{l},K))$ implies that
$\overline{\mathfrak{h}(X_{l},K)}\subseteq T(\overline{\mathfrak{h}(X_{l},K)})$,
and as $T(\overline{\mathfrak{h}(X_{l},K)})\subseteq\overline{\mathfrak{h}(X_{l},K)}$, we have that 
$\overline{\mathfrak{h}(X_{l},K)}= T(\overline{\mathfrak{h}(X_{l},K)})$. 
Since any pair of Cartan  Lie subalgebra of  a finite-dimensional classical type Lie algebra   $\mathfrak{g}$ over $K$ are conjugate (see \cite{p}),  there exists an automorphism $\sigma \in \text{Aut}(\mathfrak{g})$ such that $\mathfrak{h}=\sigma(\overline{\mathfrak{h}(X_{l},K)})$. Then,  from Lemma 5 ( see \cite{SP}), we obtain 
\[T(\mathfrak{h})=\sigma(T(\overline{\mathfrak{h}(X_{l},K)}))=\sigma(\overline{\mathfrak{h}(X_{l},K)})=\mathfrak{h}.\]
Then any Cartan subalgebra of a simple Lie $2$-algebra $\mathfrak{g}$ of classical type is a maximal tori in $\mathfrak{g}$, hence \[MT(\mathfrak{g})=\text{dim}_{K}(\mathfrak{h}).\]
\end{proof}

\medskip

\vspace{0.0cm}
A direct consequence of Theorem \ref{PROPO23} is the following. 
\begin{corollary}
\label{COR}
The toral rank of the  classical type simple Lie $2$-algebras is: 
\begin{enumerate}
\item $MT(A_{n})=n$,\,\,\,\, \hspace{0.8cm} if\,\, $2\nmid (n+1)$ \,\,$l>1.$
\item $MT(A_{n})=n-1$, \hspace{0.4cm} if\,\,\, $2\mid(n+1)$, \,\,\,$n>1.$
\item $MT(D_n)=n-1,$\hspace{0.5cm} if\,\, $n$\,\, is\,\, odd,\,\,\,$n\geq3.$
\item $MT(D_n)=n-2,$ \hspace{0.4cm} if\,\,\, $n$\,\, is\,\, even,\,\,\,$n\geq3.$
\item $MT(\mathfrak{g}_{2})=2.$
\item $MT(\mathfrak{e}_{6})=6.$
\item $MT(\mathfrak{e}_{7})=6.$
\item $MT(\mathfrak{e}_{8})=8.$
\end{enumerate}
\end{corollary}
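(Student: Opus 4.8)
The plan is to compute, case by case, the dimension of a Cartan subalgebra $\mathfrak h$ for each of the classical type simple Lie $2$-algebras listed in Corollary \ref{COR 2,4}, and then invoke Theorem \ref{PROPO23}, which identifies $MT(\mathfrak g)$ with $\dim_K(\mathfrak h)$. So the whole statement reduces to a bookkeeping exercise: for each family, exhibit (or recall) a Cartan subalgebra and count its dimension, being careful about whether one has passed to a derived subalgebra and/or quotiented by $\mathfrak z(\mathfrak{gl}_N(K))$.

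For type $A_n$ with $2\nmid(n+1)$, Theorem \ref{isa 1}(2) gives $\overline{\mathfrak g(A_n,K)}\simeq\mathfrak{sl}_{n+1}(K)$; its standard Cartan subalgebra is the span of the diagonal trace-zero matrices, which has dimension $n$, giving $MT(A_n)=n$. For $2\mid(n+1)$, Theorem \ref{isa 1}(3) gives $\mathfrak{psl}_{n+1}(K)=\mathfrak{sl}_{n+1}(K)/\mathfrak z(\mathfrak{gl}_{n+1}(K))$, and since $\mathfrak z(\mathfrak{gl}_{n+1}(K))$ is $1$-dimensional and lies inside the diagonal Cartan subalgebra, the image of that Cartan has dimension $n-1$, so $MT(A_n)=n-1$. (Note the paper's Corollary \ref{COR} carries the notational slip ``$\mathfrak{psl}_{2n}$'' from Theorem \ref{isa 1}, meaning $n+1$ even; I would simply count using the true matrix size $N=n+1$.) For type $D_n$, Corollary \ref{COR 2,4} identifies $\overline{\mathfrak g(D_n,K)}$ with $\mathfrak{sp}_{2n}(K)^{(2)}$ when $n$ is odd and with $\mathfrak{sp}_{2n}(K)^{(2)}/\mathfrak z(\mathfrak{gl}_{2n}(K))$ when $n$ is even; by Lemma \ref{isa 2}'s description, $\mathfrak{sp}_{2n}(K)^{(2)}$ consists of block matrices $\left(\begin{smallmatrix}a&b\\c&a^T\end{smallmatrix}\right)$ with $b,c$ alternating and $\operatorname{tr}(a)=0$, so its diagonal toral part is $\operatorname{span}\{d_i+d_j\}$, which is $(n-1)$-dimensional. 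Hence $MT(D_n)=n-1$ for $n$ odd, and quotienting by the $1$-dimensional center when $n$ is even drops this to $n-2$.

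For the exceptional types $\mathfrak g_2,\mathfrak e_6,\mathfrak e_7,\mathfrak e_8$, Proposition \ref{isa 3} says $\overline{\mathfrak g(X_l,K)}$ is simple, and one reads off $\dim_K\mathfrak h=l$ from the Chevalley construction — except that for $\mathfrak e_7$ the center $\mathfrak z(\mathfrak g(\mathfrak e_7,K))$ is $1$-dimensional (the root lattice index is $2$) and lies inside $\mathfrak h(\mathfrak e_7,K)$, so the Cartan subalgebra of the quotient has dimension $7-1=6$; this accounts for the ``anomalous'' entry $MT(\mathfrak e_7)=6$ rather than $7$. In each case I would explicitly note: $\mathfrak g_2$ and $\mathfrak e_8$ have trivial center (so $\dim\mathfrak h=2$ and $8$ respectively), $\mathfrak e_6$ also has trivial center in characteristic $2$ (the relevant index being $3$, coprime to $2$), giving $\dim\mathfrak h=6$. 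Combining all cases with Theorem \ref{PROPO23} yields the eight equalities.

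The main obstacle is not any single hard computation but rather getting the center-and-derived-subalgebra bookkeeping exactly right: one must correctly determine, in characteristic $2$, which classical models are reached from $\overline{\mathfrak g(X_l,K)}$ (this is already settled in Corollary \ref{COR 2,4} via Hogeweij's theorem), and then for each, whether $\mathfrak z(\mathfrak{gl}_N(K))$ is contained in the relevant algebra and inside its Cartan subalgebra, so that the passage to the quotient genuinely lowers the toral rank by one. The $\mathfrak e_7$ case is the subtle one, since it requires knowing that in characteristic $2$ the fundamental group of $E_7$ contributes a nontrivial central element to the Chevalley algebra; I would cite Hogeweij (reference $[6]$) for the structure of $\mathfrak z(\mathfrak g(X_l,K))$ in characteristic $2$ to justify that the Cartan of $\overline{\mathfrak g(\mathfrak e_7,K)}$ is $6$-dimensional.
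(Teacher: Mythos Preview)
Your proposal is correct and follows the same approach as the paper: both deduce the corollary from Theorem \ref{PROPO23} by reading off $\dim_K(\mathfrak h)$ for each type. The paper gives no details beyond the single sentence ``A direct consequence of Theorem \ref{PROPO23} is the following,'' whereas you actually carry out the case-by-case bookkeeping (identifying the Cartan in each model, checking when the center is nontrivial and sits inside $\mathfrak h$, and in particular explaining the $\mathfrak e_7$ drop via the order-$2$ fundamental group); so your write-up is strictly more explicit than what the paper provides.
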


From  Corollary \ref{COR}, it follows the following result.

\medskip

\begin{theorem}
There are no classical type simple Lie $2$-algebra  of odd toral rank.
\end{theorem}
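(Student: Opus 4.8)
The plan is to derive the statement directly from Corollary \ref{COR}, since that corollary already computes the toral rank of every classical type simple Lie $2$-algebra. The argument is essentially a case check: one lists the eight families appearing in Corollary \ref{COR} and verifies that in each case the value of $MT$ that occurs is never an odd number — or more precisely, that whenever a parameter choice would make it odd, that choice is excluded by the stated constraints.

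First I would treat the type $A_n$ cases. For $A_n$ with $2\nmid(n+1)$ (i.e. $n$ even), we have $MT=n$, which is even. For $A_n$ with $2\mid(n+1)$ (i.e. $n$ odd), we have $MT=n-1$, which is again even. So every toral rank arising from type $A$ is even. Next, the type $D_n$ cases: for $n$ odd, $MT=n-1$, which is even; for $n$ even, $MT=n-2$, which is even. Hence type $D$ contributes only even toral ranks as well. Finally, the exceptional cases are settled by direct inspection: $MT(\mathfrak{g}_2)=2$, $MT(\mathfrak{e}_6)=6$, $MT(\mathfrak{e}_7)=6$, and $MT(\mathfrak{e}_8)=8$, all even.

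Since Corollary \ref{COR 2,4} guarantees that this list of algebras is exhaustive among classical type simple Lie $2$-algebras (types $A_1$, $B_l$, $C_l$, $\mathfrak{f}_4$ being excluded by Proposition \ref{isa 3} because the corresponding Chevalley quotients fail to be simple, or reduce to the listed families), the case analysis above is complete. Therefore no classical type simple Lie $2$-algebra has odd toral rank, and in particular none has toral rank $3$.

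There is essentially no hard step here: the entire content of the theorem has already been packed into Theorem \ref{PROPO23} and Corollary \ref{COR}, so the only thing to be careful about is bookkeeping — making sure the parity of $n$, $n-1$, $n-2$ is tracked correctly against the stated hypotheses (odd/even $n$), and making sure the exhaustiveness of the list in Corollary \ref{COR 2,4} is invoked so that no stray family is overlooked. The mild subtlety worth a sentence is that in the $D_n$ family the roles swap ($n$ odd gives $n-1$, $n$ even gives $n-2$), but both branches land on an even integer, so the conclusion is uniform.
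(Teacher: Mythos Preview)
Your proposal is correct and follows exactly the paper's approach: the paper simply states that the theorem follows from Corollary \ref{COR}, and your case-by-case parity check is precisely how one verifies this. If anything, you have written out more detail than the paper itself provides.
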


\section{A (contragredient) simple Lie $2$-algebra of dimension $34$ and toral rank $4$.}

In this section we show that the contragredient Lie $2$-algebra $G(F_{4, a})$ constructed by V. Kac and V. Ve\u{i}sfe\u{i}ler (see \cite{ka}) has toral rank $4$, and we obtain the Cartan decomposition of this algebra.

\begin{definition}
Given an $(n\times n)$-matrix $A=(a_{ij})$ with elements in $K$, we denote by $\widetilde{G}(A)$ the Lie algebra determined by the system of generators $e_{i}$, $f_{i}$, $h_{i}$, $i=1,...,n$, and the system of relations 
\begin{equation}
    \label{R}
[e_{i},f_{j}]=\delta_{ij}h_{j},\quad [h_i,h_j]=0, \quad [h_i,e_j]=a_{ij}e_{j}, \quad [h_{i},f_{j}]=-a_{ij}f_{j},
\end{equation}
for $1\leq i,j\leq n.$ 
We set $\text{deg }   e_{i}=1$, $\text{deg }  f_{i}=-1$ and $\text{deg }  h_{i}=0$, $i=1,2,\dots,n$. Thus, the algebra $\widetilde{G}(A)$ becomes into a graded Lie algebra,  $\widetilde{G}(A)=\oplus_{i\in\mathbb{Z}}\widetilde{G}_{i}.$ Let $J(A)$ be a maximal homogeneous ideal in $\widetilde{G}(A)$ such that $J(A)\cap(\widetilde{G}_{-1}\oplus \widetilde{G}_{0}\oplus \widetilde{G}_{1})=0$. The Lie algebra $G(A):=\widetilde{G}(A)/J(A)$ is called a \textit{contragredient Lie algebra} and  $A$ is its Cartan matrix. 
\end{definition}

In \cite{ka},  V. Kac\, and\, V. Ve\u{i}sfe\u{i}ler considered the algebra $$G(F_{4,a}):=\widetilde{G}(F_{4,a})/J(F_{4,a}),$$  where \[F_{4,a}:= \left( \begin{array}{cccc}
0 & 1 & 0 & 0\\
a & 0 & 1 & 0 \\
0 & 1 & 0 & 1 \\
0 & 0 & 1 & 0 \end{array} \right),\] 
 $a\in K\setminus\{0,1\}$ and $J(F_{4,a})$ is the only maximal homogeneous ideal in $\widetilde{G}(F_{4,a})$ such that
 \[
\begin{cases}
J(F_{4,a})\cap\text{span}\{h_1,h_2,h_3,h_4\}  =0\\
J(F_{4,a}) \cap\text{span} \{e_i,f_j\}  =0.
\end{cases}
\]

They  proved that $G(F_{4,a})$ is a  simple Lie $2$-algebra of dimension $34$ with Cartan matrix $F_{4,a}$, with $a\in K\setminus\{0,1\}$ (see \cite{ka}, Proposition $3.6$). We now prove that this $2$-algebra Lie has toral rank $4$ and, furthermore, we give its Cartan decomposition.


 
 \medskip
 
From \eqref{R}, we conclude that $$\mathfrak{h}:=\text{span}\{\bar{h}_{i}:=h_i+J(F_{4,a}):i\in I_{4}\}$$ is a Cartan subalgebra of $G(F_{a,4})$.
 We now explicitly describe the maximal tori $T(\mathfrak{h})$ consisting of toroidal elements in $ \mathfrak{h}$. 
 
Since $h_1^{[2]}\in \mathfrak{h}$, we have $h_1^{[2]}=\delta_1h_1+\delta_2h_2+\delta_3h_3+\delta_4h_4$, and by using the relations \eqref{R} we obtain $$0=[h_1,[h_1,e_1]]=\delta_1[h_1,e_1]+\delta_2[h_2,e_1]+\delta_3[h_3,e_1]+\delta_4[h_4,e_1]=\delta_2ae_1,$$  which implies that $\delta_2=0$. Similarly, we obtain $\delta_2=\delta_3=\delta_4=0$ and $\delta_1=1$. So, $h_1^{[2]}=h_1$
\\
We also find  $h_3^{[2]}=h_3, \  h_{4}^{[2]}=h_{4}$ and  $h_2^{[2]}=ah_2+\bar{a}h_4$, where $\bar{a}=a+1$ 

  Let $t_2:=xh_2+yh_4$, with $x,y\in K$. If the equality  $t_2^{[2]}=t_2$ holds true, then $x, y$ satisfy the following system of equations
  \[  \left\{
\begin{array}{ll}
ax^2+x & =0
 \\
\bar{a}x^2+y^2+y & =0, 
 \end{array}
\right.
\]
whose solution set is 
$$\{(0,0), (0,1),(\frac{1}{a},\frac{1}{a}), (\frac{1}{a},\frac{\bar{a}}{a}) \}.$$
First two solutions give $t_2=0$, and $t_2=h_4$ respectively, and with  the last two solutions we obtain $t_2=\frac{1}{a}(h_2+h_4)$ and  $t_2=\frac{1}{a}(h_2+\bar{a}h_4)$. Since $\frac{1}{a}(h_2+h_4)=\frac{1}{a}(h_{2}+\bar{a}h_{4})-h_4$, we have  $$T(\mathfrak{h}):=\text{span}\{\bar{h}_1,\bar{h}_3,\bar{h}_4,\frac{1}{a}(h_2+h_4)+J(F_{4,a})\}$$ and $\text{dim}_{K}(T(\mathfrak{h}))=4.$ 
This fact shows that $MT(G(F_{4,a}))=4$.

\medskip

 We now find the Cartan decomposition of $G(F_{4,a})$ with respect to $T(\mathfrak{h})$. By definition of the ideal $J(F_{4, a})$, the elements $e_i, \ f_i$ and $h_i$ for $1\leq i\leq 4$ does not belong to $J(F_{4, a})$. Therefore, the classes $\overline{e}_i=e_i+J(F_{4, a}),  \overline{f}_i=f_i+J(F_{4, a})$ and $\overline{e}_i=h_i+J(F_{4, a})$ for $1\leq i\leq 4$, belong to a basis for $G(F_{4,a})$. Now, to complete a basis for $G(F_{4,a})$, we consider the product $xy:=\left[x, y \right]$. The  products $xy$, where $x$ and $y$ are generators of $G(F_{4,a})$, and some of them does not belong to $\{e_i, f_i : 1\leq i \leq 4\}$ are zero or belong to $\text{span}\{h_i, e_i, f_i : 1\leq i \leq 4\}$. Thus, the only products of two generators that give us new generators are $e_ie_j$ and $f_if_j$ with $i<j$. So, the elements $\overline{p_{ij}}=e_ie_j+J(F_{4, a})$ and $\overline{q_{ij}}=f_if_j+J(F_{4, a})$ with $i<j$ are also generators of $G(F_{4, a}),$ which are linearly independent with $e_ie_j+J(F_{4, a})$ and $f_if_j+J(F_{4, a})$. Reasoning in a similar way we obtain that the elements $(e_1e_2)e_3, (e_1e_2)e_4, (e_1e_3)e_4, (e_2e_3)e_4, ((e_1e_2)e_3)e_4$ modulo $J(F_{4, a})$ and $(f_1f_2)f_3, (f_1f_2)f_4, (f_1f_3)f_4, (f_2f_3)f_4, ((f_1f_2)f_3f)_4$ modulo $J(F_{4, a})$ complete a basis for $G(F_{4, a})$. We denote this basis by $\Phi$.\\
 
Next, we calculate the weights for each element of the basis $\Phi$ of $\widetilde{G}(F_{4,a})$ with respect to $$\mathfrak{h}_{1}:=\{h_1,\frac{1}{a}(h_2+h_4),h_3,h_4\}$$ are:

\begin{itemize}
  \item 
$[e_1,h_1]=a_{11}e_1=0e_1$,\\
$[e_1,\frac{1}{a}(h_2+h_4)]=\frac{1}{a}([e_1,h_2]+[e_1,h_4]=\frac{1}{a}(ae_1)=1e_1$,\\
$[e_1,h_3]=a_{31}e_1=0e_1$,\\
$[e_1,h_4]=a_{41}e_1=0e_1$.\\
Then, the weight of $\overline{e}_1$ is $\beta:=(0,1,0,0).$

\medskip
\item 
$[e_2,h_1]=a_{12}e_2=1e_2$,\\
$[e_2,\frac{1}{a}(h_2+h_4)]=\frac{1}{a}([e_2,h_2]+[e_2,h_4]=0e_2$,\\
$[e_2,h_3]=a_{32}e_2=1e_2$,\\
$[e_1,h_4]=a_{41}e_1=0e_1$.\\
The weight the $\overline{e}_2$ is $\alpha+\gamma:=(1,0,1,0).$

\medskip
\item 
$[e_3,h_1]=a_{13}e_3=0e_3$,\\
$[e_3,\frac{1}{a}(h_2+h_4)]=\frac{1}{a}([e_3,h_2]+[e_3,h_4]=0e_3$,\\
$[e_3,h_3]=a_{33}e_3=0e_3$,\\
$[e_3,h_4]=a_{43}e_3=1e_3$.\\
 The weight the $\overline{e}_3$ is $\lambda:=(0,0,0,1).$
 
 \medskip
\item 
$[e_4,h_1]=a_{14}e_4=0e_4$,\\
$[e_4,\frac{1}{a}(h_2+h_4)]=\frac{1}{a}([e_4,h_2]+[e_4,h_4])=0e_4$,\\
$[e_4,h_3]=a_{34}e_4=1e_4$,\\
$[e_4,h_4]=a_{44}e_4=0e_4$.\\
Then, the  weight of $\overline{e}_4$ is $\gamma:=(0,0,1,0).$
 \end{itemize}
 
By the  similarity in the definition of the bracket $[h_i, f_j]=a_{ij}f_j$ with the bracket $[h_i, e_j]=a_{ij}e_j$, we deduce that $\overline{e}_i$ and $\overline{f}_i$, for $1\leq i\leq 4$, have the same weight. On the other hand, by using $[\mathfrak{g}_{\xi},\mathfrak{g}_{\mu}]\subseteq \mathfrak{g}_{\xi+\mu}$, we obtain that the remaining elements of $\Phi$ have the weights given in Table \ref{tabla:sencilla}, where we use the notation $\overline{p_{ijk}}=(e_ie_j)e_k+ J(F_{4, a})$,  $\overline{q_{ijk}}=(f_if_j)f_k+ J(F_{4, a})$,  $\overline{p_{1234}}=((e_1e_2)e_3)e_4+ J(F_{4, a})$ and $\overline{q_{1234}}=((f_1f_2)f_3)f_4+ J(F_{4, a})$.

\begin{table}[!ht]
\begin{center}
\begin{tabular}{|l|l|l|}
\hline
Root spaces & Basis & Roots \\
\hline \hline
$\mathfrak{g}_\beta$ & $\overline{e}_1$,\:  $\overline{f}_1$ & $\beta:=(0,1,0,0)$ \\ \hline
$\mathfrak{g}_{\alpha+\gamma}$ & $\overline{}e_2$, \: $\overline{f}_2$ & $\alpha+\gamma:=(1,0,1,0)$ \\ \hline
$\mathfrak{g}_\lambda $ & $\overline{e}_3$, \: $\overline{f}_3$ & $\lambda:=(0,0,0,1)$ \\ \hline
$\mathfrak{g}_\gamma $ & $\overline{e}_4$, \: $\overline{f}_4$ & $\gamma:=(0,0,1,0)$ \\ \hline
$\mathfrak{g}_{\alpha+\beta+\gamma}$ & $\overline{p_{12}}$,\:  $\overline{q_{12}}$ & $\alpha+\beta+\gamma:=(1,1,1,0)$ \\ \hline
$\mathfrak{g}_{\beta+\lambda}$ & $\overline{p_{13}}$,\:  $\overline{q_{13}}$ & $\beta+\lambda:=(0,1,0,1)$ \\ \hline
$\mathfrak{g}_{\beta+\gamma}$ & $\overline{p_{14}}$,\:  $\overline{q_{14}}$ & $\beta+\gamma:=(0,1,1,0)$ \\ \hline
$\mathfrak{g}_{\alpha+\gamma+\lambda}$ & $\overline{p_{23}}$,\:  $\overline{q_{23}}$ & $\alpha+\gamma+\lambda:=(1,0,1,1)$\\ \hline
$\mathfrak{g}_\alpha$ & $\overline{p_{24}}$,\:  $\overline{q_{24}}$ & $\alpha:=(1,0,0,0)$ \\ \hline
$\mathfrak{g}_{\gamma+\lambda}$ & $\overline{p_{34}}$,\:  $\overline{q_{34}}$ & $\gamma+\lambda:=(0,0,1,1)$ \\ \hline
$\mathfrak{g}_{\alpha+\beta+\gamma+\lambda}$ & $\overline{p_{123}}$,\:  $\overline{q_{123}}$  & $\alpha+\beta+\gamma+\lambda:=(1,1,1,1)$ \\ \hline
$\mathfrak{g}_{\alpha+\beta}$ &  $\overline{p_{124}}$,\:  $\overline{q_{124}}$  & $\alpha+\beta:=(1,1,0,0)$ \\ \hline
$\mathfrak{g}_{\beta+\gamma+\lambda}$ & $\overline{p_{134}}$,\:  $\overline{q_{134}}$  & $\beta+\gamma+\lambda:=(0,1,1,1)$ \\ \hline
$\mathfrak{g}_{\alpha+\lambda}$ &  $\overline{p_{234}}$,\:  $\overline{q_{234}}$  & $\alpha+\lambda:=(1,0,0,1)$ \\ \hline
$\mathfrak{g}_{\alpha+\beta+\lambda}$ & $\overline{p_{1234}}$, \: $\overline{q_{1234}}$ & $\alpha+\beta+\lambda:=(1,1,0,1)$ \\ \hline
\end{tabular}
\caption{Root spaces of $G(F_{4,a})$.}
\label{tabla:sencilla}
\end{center}
\end{table}
 Therefore, the Cartan decomposition of $G(F_{4,a})$ with respect to $T(\mathfrak{h})$ is  \[G(F_{4,a})=T(\mathfrak{h})\oplus\left(\underset{\xi\in G}\oplus\mathfrak{g}_\xi\right),\] where $G:=\left\langle \alpha,\beta,\gamma,\lambda\right\rangle$ is an elementary abelian group of order $16$, and $\text{dim}_k(\mathfrak{g}_\xi)=2$, for all $\xi\in G$.
 Therefore, we have:
 
 \medskip
 
\begin{theorem}
The  contragredient  Lie algebra $G(F_{4,a})$ on $K$
with Cartan matrix  \[F_{4,a}:= \left( \begin{array}{cccc} 
0 & 1 & 0 & 0\\
a & 0 & 1 & 0 \\
0 & 1 & 0 & 1 \\
0 & 0 & 1 & 0 \end{array} \right)\] 
have the following properties:
\begin{enumerate}
\item $G(F_{4,a})$ is a simple Lie $2$-algebra of dimension $34.$
\item $MT(G(F_{4,a}))=4.$
\item  the Cartan descomposition of $G(F_{4,a})$ with respect to $T(\mathfrak{h})$ is  \[G(F_{4,a})=T(\mathfrak{h})\oplus\left(\underset{\xi\in G}\oplus\mathfrak{g}_\xi\right),\] where $G:=\left\langle \alpha,\beta,\gamma,\lambda\right\rangle$ is an elementary abelian group of order $16$, and $\text{dim}_k(\mathfrak{g}_\xi)=2$, for all $\xi\in G$.
\end{enumerate}
\end{theorem}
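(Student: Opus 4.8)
The plan is to assemble the three assertions from results already available and from the explicit computations carried out just above the statement. Item (1) requires no new work: it is exactly Proposition~$3.6$ of Kac and Ve\u{\i}sfe\u{\i}ler (\cite{ka}), which asserts that $G(F_{4,a})$ is a $34$-dimensional simple Lie algebra with Cartan matrix $F_{4,a}$ for $a\in K\setminus\{0,1\}$, together with the observation from Example~\ref{EX3}-style reasoning that a simple Lie algebra over a field of characteristic $2$ with trivial center carries a (unique) $2$-map whenever it is closed under squaring inside an ambient associative envelope; here the $2$-map is inherited because $G(F_{4,a})$ is the quotient of $\widetilde{G}(F_{4,a})$ by the homogeneous ideal $J(F_{4,a})$ and the relations \eqref{R} are homogeneous, so the square of a homogeneous element is again well defined modulo $J(F_{4,a})$. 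Thus (1) is quoted verbatim.

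For item (2), I would reproduce the torus computation performed in the paragraphs preceding the theorem. First, $\mathfrak{h}=\operatorname{span}\{\bar h_1,\bar h_2,\bar h_3,\bar h_4\}$ is a Cartan subalgebra of $G(F_{4,a})$, directly from the defining relations \eqref{R}. Next one computes the $2$-map on $\mathfrak{h}$: using $0=[h_1,[h_1,e_1]]$ and the analogous brackets against each $e_j$, one forces $h_1^{[2]}=h_1$, and symmetrically $h_3^{[2]}=h_3$, $h_4^{[2]}=h_4$, while $h_2^{[2]}=ah_2+\bar a h_4$ with $\bar a=a+1$. Then, writing $t_2=xh_2+yh_4$ and imposing $t_2^{[2]}=t_2$, one is led to the system $ax^2+x=0$, $\bar a x^2+y^2+y=0$, whose four solutions $(0,0),(0,1),(1/a,1/a),(1/a,\bar a/a)$ give, after discarding redundancies (note $\tfrac1a(h_2+h_4)=\tfrac1a(h_2+\bar a h_4)-h_4$), the toral subalgebra $T(\mathfrak{h})=\operatorname{span}\{\bar h_1,\bar h_3,\bar h_4,\tfrac1a(h_2+h_4)+J(F_{4,a})\}$, which is $4$-dimensional. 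Since $T(\mathfrak{h})$ is maximal in $G(F_{4,a})$ (Lemma~4 of \cite{SP}), we get $MT(G(F_{4,a}))=\dim_K T(\mathfrak{h})=4$, proving (2).

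For item (3), the strategy is to first identify an explicit basis $\Phi$ of $G(F_{4,a})$ adapted to the grading. Starting from the classes $\bar e_i,\bar f_i,\bar h_i$ ($1\le i\le 4$), which are nonzero by the defining property of $J(F_{4,a})$, one shows by direct bracket bookkeeping using \eqref{R} that the only iterated brackets of the $e_i$ (respectively $f_i$) that produce genuinely new basis vectors are $e_ie_j$ ($i<j$), the triple products $(e_ie_j)e_k$, and the quadruple product $((e_1e_2)e_3)e_4$ (and their $f$-counterparts), all other products of two generators landing in $\operatorname{span}\{h_i,e_i,f_i\}$ or vanishing; counting gives $4+2\cdot(4+6+4+1)=4+30=34$, matching the dimension, which certifies that $\Phi$ is a basis. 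Then, using $[\mathfrak g_\xi,\mathfrak g_\mu]\subseteq\mathfrak g_{\xi+\mu}$, one computes the $T(\mathfrak{h})$-weight of each basis vector: the four fundamental weights $\beta=(0,1,0,0)$ of $\bar e_1$, $\alpha+\gamma=(1,0,1,0)$ of $\bar e_2$, $\lambda=(0,0,0,1)$ of $\bar e_3$, $\gamma=(0,0,1,0)$ of $\bar e_4$ (and $\bar f_i$ shares the weight of $\bar e_i$ because $[h_i,f_j]=a_{ij}f_j$ up to sign, and in characteristic $2$ the sign is irrelevant), and the remaining weights are read off additively, yielding Table~\ref{tabla:sencilla}. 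Since the sixteen weights that appear are exactly the elements of the elementary abelian group $G=\langle\alpha,\beta,\gamma,\lambda\rangle$ of order $16$, each occurring with a two-dimensional weight space, the Cartan decomposition $G(F_{4,a})=T(\mathfrak{h})\oplus\bigoplus_{\xi\in G}\mathfrak g_\xi$ with $\dim_K\mathfrak g_\xi=2$ follows.

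The main obstacle is the bookkeeping in item (3): one must verify carefully that no iterated bracket beyond those listed survives in the quotient $\widetilde G(F_{4,a})/J(F_{4,a})$, i.e.\ that the claimed $34$ vectors are linearly independent and span. The clean way to control this is the dimension count — the construction produces at most $34$ vectors, and Proposition~$3.6$ of \cite{ka} guarantees $\dim_K G(F_{4,a})=34$, so the spanning set is forced to be a basis; the weight assignments are then a routine, if lengthy, application of \eqref{R} and additivity of weights. The potential subtlety is making sure the relations \eqref{R} for $F_{4,a}$, together with the Serre-type consequences forced by $J(F_{4,a})$, really do kill, say, $(e_1e_2)(e_3e_4)$ and the length-five brackets; this is handled by repeatedly reducing via the Jacobi identity to the products already in the list, which is exactly the content of the paragraph preceding the theorem.
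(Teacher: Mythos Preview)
Your proposal is correct and follows essentially the same route as the paper: item~(1) is quoted from \cite{ka}, Proposition~3.6; item~(2) reproduces verbatim the computation of $h_i^{[2]}$ and the solution of the system $ax^2+x=0$, $\bar a x^2+y^2+y=0$ carried out just before the theorem; and item~(3) is the basis-and-weight bookkeeping summarized in Table~\ref{tabla:sencilla}, with the dimension count against \cite{ka} certifying that the listed vectors form a basis. One small remark: your extra justification in (1) about the $2$-map being inherited from ``an ambient associative envelope'' is unnecessary and not quite on point here, since $G(F_{4,a})$ is not presented as a subalgebra of an associative algebra; the paper simply takes the existence of the $2$-structure as part of what is proved in \cite{ka}.
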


\end{document}